\theoremstyle{plain}
\newtheorem*{maintheorem*}{Main Theorem}
\newtheorem*{thm*}{Theorem}
\newtheorem*{thma*}{Theorem A}
\newtheorem*{thmaa*}{Theorem A'}
\newtheorem*{thmb*}{Theorem B}
\newtheorem*{thmo*}{Theorem 1.1}
\newtheorem*{thmc*}{Theorem C}
\newtheorem*{thmd*}{Theorem D}
\newtheorem*{thmf*}{Theorem 4.1}
\newtheorem*{remark*}{Remark}
\newtheorem*{conjecture*}{Conjecture}
\newtheorem*{prop*}{Proposition}
\newtheorem{thm}{Theorem}[section]
\newtheorem{cor}[thm]{Corollary}
\newtheorem{lem}[thm]{Lemma}
\theoremstyle{definition}
\newtheorem*{proofc*}{Proof of Theorem C}
\newtheorem{problem}[thm]{Problem}
\newtheorem{definition}[thm]{Definition}
\newtheorem{remark}[thm]{Remark}
\def\bbz{\mathbb{Z}}
\def\bbq{\mathbb{Q}}
\def\bbf{\mathbb{F}}
\def\bbr{\mathbb{R}}
\def\bbc{\mathbb{C}}
\def\bbn{\mathbb{N}}
\def\bfr{\mathfrak{B}}
\def\pfr{\mathfrak{P}}
\def\ybf{\mathbf{y}}
\def\bbf{\mathbf{B}}
\def\dbf{\mathbf{D}}
\def\wbf{\mathbf{w}}
\def\vare{\varepsilon}
\def\e{\mathbf{e}}
\def\l{|\langle}
\def\r{\rangle|}
\def\d{\partial}
\def\g{\nabla}
\def\p{\prod_{\nu\in S}}
\def\u{\mathcal{U}}
\def\U{\mathbf{U}}
\def\x{\mathbf{x}}
\def\fbf{\mathbf{f}}
\def\diag{{\rm diag}}
\def\cf{\check{f}}
\def\GL{\rm{GL}}
\def\rank{\rm{rank}}
\def\q{\mathbf{q}}
\def\para{\|}
\def\h{\hspace{1mm}}
\def\ball{\prod_{\nu\in S} B(x_{\nu},\frac{1}{4d_{\infty}\|\g{g_{\nu}}(x_{\nu})\|_{\nu}})}
\def\fball{\prod_{\nu\in {S_{\mathcal{R}}}^c} B(x_{\nu},\frac{1}{4R^{1/\widetilde{\kappa_{\mathcal{R}}}}\|\g{g_{\nu}}(x_{\nu})\|_{\nu}})}
\def\infball{\prod_{\nu\in S_{\mathcal{R}}} B(x_{\nu},\frac{R^{1/\kappa_{\mathcal{R}}}}{4d_{\infty}\|\g g_{\nu}(x_{\nu})\|_{\nu}})}
\title{\sc $S$-Arithmetic Khintchine-Type Theorem}
\author{A.~Mohammadi
\and
A.~Salehi Golsefidy\footnote{A. S-G. was partially supported by the NSF grant DMS-0635607. Part of the research conducted while A. S-G. was a Liftoff fellow.  }}
\date{1/22/2009}
\begin{document}
%%%%%%%%%%%%%%%%%%%%%%%%%%%%%% Abstract %%%%%%%%%%%%%%%%%%%%%%%%%%%%%%%%%
\maketitle
\begin{abstract}

\noindent
In this article we prove a convergence $S$-arithmetic Khintchine-type theorem for product of non-degenerate $\nu$-adic manifolds, where one of them is the  Archimedian place.\footnote{{\em 2000 Mathematics Subject Classification} 11J83, 11K60} 
\end{abstract}

%%%%%%%%%%%%%%%%%%%%%%%%%%%%%% Introduction %%%%%%%%%%%%%%%%%%%%%%%%%%%%%
\section{Introduction}

\textbf{Metric Diophantine approximation on $\bbr ^n$.}
Any real number can be approximated by rational numbers.
Diophantine approximation concerns the precision of the approximation. For instance, by 
Dirichlet's Theorem, one can see that for any real number $\xi$, there are infinitely many integers $p$ and
$q$, such that $|q\xi-p|<1/q$. This, in some sense,  indicates that in order to get a ``good" approximation you do
not need a ``very large" denominator. On the other hand, it is well known that any quadratic algebraic number 
cannot be ``very well" approximated. One can ask what happens for a ``random" number, which 
is the subject of metric Diophantine approximation. Let us be more precise. Let $\psi$ be a 
decreasing function from $\bbr^+$ to $\bbr^+$ e.g. $\psi_{\vare}(q)=1/q^{1+\vare}$. A real number 
$\xi$ is called {\it $\psi$-approximable} if for infinitely many integers $p$ and $q$, 
one has $|q\xi-p|<\psi(|q|)$. It is called {\it very well approximable} if it is $\psi_{\vare}$-A for 
some positive $\vare$. A.~Khintchine~\cite{Kh24} has shown that almost all (resp. almost no) points, in terms of the 
Lebesgue measure, are $\psi$-A if $\sum_{q=1}^{\infty}\psi(q)$ diverges 
(resp. converges)(See~\cite[Chapter IV, Section 5]{Sch}) . There are two ways to generalize the notion of $\psi$-A to $\bbr^n$:
\begin{itemize}\item[a)] $\|q{\bf\xi}-{\bf p}\|<\psi(|q|)^{1/n}$ for infinitely many $q\in\bbz$ and
 ${\bf p}\in \bbz^n$.
\item[b)]$|{\bf q}\cdot{\bf \xi}-p|<\psi(\|{\bf q}\|^n)$ for infinitely many ${\bf q}\in\bbz^n$ and
 $p\in\bbz$.\end{itemize}
 
A priori there are two notions of VWA vectors, i.e. being $\psi_{\vare}$-A for some positive $\vare,$
either in terms of (a) or (b). However by means of Khintchine transference principle, these two give
rise to the same notion (See~\cite[Chap. 1]{BD}) or~\cite[Chap. 5]{Ca}).  Groshev~\cite{Gr} proved the aforementioned theorem in setting (b) (See~\cite{Gr}), while in setting (a) it was already known to Khintchine in 1926. 

\textbf{Metric Diophantine approximation on manifolds.}
One can 
restrict oneself to a submanifold of $\bbr^n$, and ask if a random point on this submanifold is
 $\psi$-A. In fact one of the first questions in this direction was posed by K.~Mahler~\cite{Ma}. He conjectured
 that almost all points on the {\it Veronese curve} $\{(x,x^2,\cdots,x^n)|x\in\bbr\}$ are not VWA. Lots of works had been done to prove this conjecture by J.~Kubilius, B.~Volkmann, W.~LeVeque, F.~Kash, and W.~M.~Schmidt. In particular, the problem was solved for $n=2$ by Kubilius~\cite{Ku} and for $n=3$ by Volkmann~\cite{Vol}. Finally Mahler's conjecture was settled affirmatively by Sprind\v{z}uk~\cite{sp1,sp2}, and his proof led to the theory of Diophantine approximation
 on manifolds. According to his terminology, a submanifold $M\subseteq \bbr^n$ is called {\it extremal}
 if almost all points of $M$ are not VWA. He conjectured~\cite{sp3} that any ``nondegenerate" submanifold of
 $\bbr^n$ is extremal (ref.~\cite{BD} for the definition of nondegeneracy). In fact, he conjectured this in the analytic setting. It is worth mentioning that a manifold $M=\{(f_1({\bf x}),\cdots,f_n({\bf x}))|{\bf x}\in U\}$ with analytic coordinates $f_i$'s is non-degenerate if and only if the functions $1,f_1,\cdots,f_n$ are linearly independent over $\bbr$. D.~Kleinbock and G.~Margulis~\cite{KM} proved a stronger version of this
 conjecture using dynamics of special unipotent flows on the space of lattices. Later V.~Bernik, 
 D.~Kleinbock, and G.~Margulis~\cite{BKM} and V.~Beresnevich~\cite{Ber1,Ber2,Ber3} independently proved a convergence Khintchine-type theorem on manifolds. 
 For instance, they showed that if $\sum_{{\bf q}\in \bbz^n\setminus{\bf 0}}\psi(\|{\bf q}\|^n) $ 
 converges, almost no point of a non-degenerate submanifold is $\psi$-A. The divergence case has
 been also completely solved by V.~Beresnevich, V.~Bernik, D.~Kleinbock, and G.~Margulis~\cite{BBKM}.

\textbf{$S$-arithmetic Diophantine approximation.}
There are relatively less known results in the $p$-adic, and simultaneous approximation in different places. In a recent work V.~Beresnevich, V.~Bernik, E.~Kovalevskaya~\cite{BBK}, proved both the convergence and the divergence Khintchine-type theorem for the $p$-adic  Veronese curve, i.e. $\{(x,x^2,\cdots,x^n)|x\in\bbq_p\}$. It is worth mentioning that the convergence case had been already proved by E.~Kovalevskaya~\cite{Ko1}. There are a few other results of convergence Khintchine-type  for more general curves in the $p$-adic plane or space, e.g.~\cite{BK1,Ko2}. We take on this case in~\cite{MS} where we prove both the convergent and the divergent Khintchine-type theorem for non-degenerate $p$-adic manifolds.

Situation in the simultaneous Diophantine approximation is even less clear. The most general Khintchine-type theorem, in this case,  is a recent work of   V.~Bernik and E.~Kovalevskaya\cite{BK2}. They establish an inhomogeneous convergence Khintchine-type theorem for the Veronese curve with components in product of several local fields, more specifically  $\{(\x,\x^2,\dots,\x^n)|\x\in\bbc\times\bbr\times\prod_{p\in S}\bbq_p\}$. For product of non-degenerate manifolds, D.~Kleinbock and G.~Tomanov, in a recent paper~\cite{KT}, came up with an $S$-arithmetic version of metric Diophantine approximation. They carefully defined the notion of extremal manifolds and proved the analogous theorem. Let us briefly recall some of the definitions and results from their work. 

Fix a set $S$ of cardinality $\kappa$ consisting of distinct normalized valuations of $\bbq$. Let $\bbq_S=\p \bbq_{\nu}, S_f=S\setminus \{\nu_{\infty}\}$, and $\tilde{S}=S\cup \{\nu_{\infty}\}$. Using a Dirichlet-principle-type argument, one can show that for any $\xi\in\bbq_S^n$ with $S$-norm at most one, there are infinitely many $(\q,q_0)\in\bbz^n\times\bbz$, such that 
$$ |\q\cdot\xi+q_0|_S^{\kappa}\le \|\q\|_{\infty}^{-n}\hspace{1cm}\mbox{if}\hspace{.2cm}\nu_{\infty}\in S,$$
$$|\q\cdot\xi+q_0|_S^{\kappa}\le \|(\q,q_0)\|_{\infty}^{-n-1}\hspace{1cm}\mbox{if}\hspace{.2cm}\nu_{\infty}\not\in S.$$
Accordingly, they defined the notion of VWA for a vector in $\bbq_S^n$; namely, $\xi\in\bbq_S^n$ is called VWA if for some $\vare>0$, there are infinitely many $(\q,q_0)\in\bbz^n\times\bbz$, such that
$$ |\q\cdot\xi+q_0|_S^{\kappa}\le \|\q\|_{\infty}^{-n-\vare}\hspace{1cm}\mbox{if}\hspace{.2cm}\nu_{\infty}\in S,$$
$$|\q\cdot\xi+q_0|_S^{\kappa}\le \|(\q,q_0)\|_{\infty}^{-n-1-\vare}\hspace{1cm}\mbox{if}\hspace{.2cm}\nu_{\infty}\not\in S.$$
Extremal submanifolds of $\bbq_S^n$ were defined similar to the real case and they proved that:
\begin{thma*}
Let $M\subseteq\mathbb{Q}_S^n$ be a non-degenerate $C^k$ manifold, then $M$ is extremal.
\end{thma*}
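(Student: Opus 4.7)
The strategy I would follow is the dynamical one introduced by Kleinbock and Margulis in the real case, adapted to the $S$-arithmetic setting. The plan is to translate the Diophantine condition (VWA) into a statement about excursions to the cusp of orbits on the space of $S$-arithmetic unimodular lattices $X_{n+1} = \mathrm{SL}_{n+1}(\bbq_S)/\mathrm{SL}_{n+1}(\bbz[1/p: p\in S_f])$, then prove a quantitative non-divergence estimate that rules out ``too many'' such excursions on a non-degenerate manifold. Almost sure non-extremality would force the orbit of almost every lattice associated to a point on $M$ to go ``deep'' into the cusp infinitely often at a certain rate; the non-divergence estimate combined with Borel--Cantelli will forbid this.

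Concretely, to each $\x \in \bbq_S^n$ one associates the unimodular lattice $u_\x \bbz[1/S_f]^{n+1}$, where $u_\x$ is the unipotent sending $e_0 \mapsto e_0+\x$ and fixing the other basis vectors. For each integer vector $(\q,q_0)$, the co-vector $(\q,q_0)$ evaluated at $u_\x$ controls the size $|\q\cdot\x+q_0|_S^\kappa$ and $\|\q\|_\infty$ simultaneously. Choosing an appropriate one-parameter family of diagonal elements $g_t \in \mathrm{SL}_{n+1}(\bbq_S)$ that expand the coordinate hosting $\q\cdot\x+q_0$ and contract those hosting $\q$, the Dani-type correspondence says that $\x$ is VWA iff there exist $\vare>0$ and arbitrarily large $t$ for which $g_t u_\x \bbz[1/S_f]^{n+1}$ contains a non-zero vector of $S$-norm at most $e^{-\vare t}$, i.e.\ iff the orbit $(g_t u_\x \bbz[1/S_f]^{n+1})_{t\ge 0}$ enters the shrinking cusp neighborhoods $K_{e^{-\vare t}} = \{\Lambda: \Lambda \text{ has a non-zero vector of norm}\le e^{-\vare t}\}$ for infinitely many $t$ in a suitable discrete set.

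The main step is then a quantitative non-divergence theorem for the family $\{g_t u_{\fbf(\ybf)} \bbz[1/S_f]^{n+1}\}_{\ybf}$ as $\ybf$ ranges over a ball in parameter space of a non-degenerate map $\fbf$. The engine is the notion of $(C,\alpha)$-good functions: on a non-degenerate analytic (or $C^k$) manifold, the key functions controlling norms of wedge products of vectors pulled back along $\fbf$ are $(C,\alpha)$-good in each place $\nu \in S$ simultaneously, where $\alpha$ depends on the order of non-degeneracy. One then establishes an $S$-arithmetic version of the Kleinbock--Margulis quantitative estimate bounding, by $C'\eta^\alpha$, the measure of $\ybf$ such that $g_t u_{\fbf(\ybf)} \bbz[1/S_f]^{n+1}$ enters $K_\eta$. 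Summing over $t$ in a geometrically spaced sequence and over $\eta = e^{-\vare t}$ gives a convergent series; Borel--Cantelli then yields that almost no $\ybf$ is VWA.

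The hard part will be the simultaneous $S$-arithmetic $(C,\alpha)$-good property and the covering-type argument needed for quantitative non-divergence in a product of places, since one must balance contributions from the Archimedean place with those from the non-Archimedean places, where the geometry of balls and the behaviour of polynomials differ. In particular, reducing from sub-lattices of $\bbz[1/S_f]^{n+1}$ to a manageable finite combinatorial problem requires the analog of Minkowski-type reduction in the $S$-arithmetic setting, and verifying the $(C,\alpha)$-good property for the relevant compositions of coordinate functions of $\fbf$ with polynomial norm functions at each $\nu \in S$ is the technical core. Once that is in place, the rest of the argument is a direct Borel--Cantelli/measure-theoretic estimate, and the theorem follows.
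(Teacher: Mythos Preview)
Theorem~A is not proved in this paper; it is quoted as a result of Kleinbock and Tomanov~\cite{KT}. Your outline is essentially the approach taken there --- the $S$-arithmetic adaptation of the Kleinbock--Margulis dynamical method via the Dani-type correspondence, $(C,\alpha)$-good functions, and a quantitative non-divergence estimate followed by Borel--Cantelli --- so in that sense your proposal matches the original source. The present paper does not re-prove Theorem~A but instead builds on the same machinery (the quantitative non-divergence theorem of~\cite{KT} is recalled here as Theorem~\ref{poset}) to establish the stronger Khintchine-type Theorem~\ref{khintchine}, of which Theorem~A is the special case $\Psi(\q)=\|\q\|_S^{-ng(\mathcal{R})-\vare}$ with $\mathcal{R}=\bbz$.
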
 
\textbf{A few terminologies and the statement of the main result.}
Here we introduce necessary notations to state the main results of the article, and refer the reader to the second section for the definitions of the technical terms. Let $S,S_f$ and $\tilde{S}$ be as before. It is well known  that $\bbz_{\tilde{S}}=\bbq\cap\bbq_{\tilde{S}}\cdot\prod_{\nu\not\in {\tilde{S}}}\bbz_{\nu}$ is a co-compact lattice
 in $\bbq_{\tilde{S}}$, and $[0,1)\times \prod_{\nu\in S_f}\bbz_{\nu}$ is a fundamental domain of $\bbz_{\tilde{S}}$ in $\bbq_{\tilde{S}}$. As we mentioned before any vector in $\bbq_S^n$ can be approximated with rational vectors. However this time, we view the field of rational numbers as the field of fractions of $\mathcal{R}$ a subring of $\bbz_{\tilde{S}}$. It is worth mentioning that any subring of $\bbz_{\tilde{S}}$ is of the form $\bbz_{S'}$ for a subset $S'$, which contains the infinite place, of $\tilde{S}$. Any such $\mathcal{R}$ is discrete, so it has just finitely many elements $a_r$ in a ball of radius $r$ in $\bbq_{\tilde{S}}$. It is easy to see that $a_r$ grows polynomially with the growth degree $g(\mathcal{R})$ equal to $|S'|$. In particular one has $|B_r\cap \mathcal{R}|<2 r^{g(\mathcal{R})}$.

Using Dirichlet-principle-type argument one can see that for any $\xi\in \bbq_S^n$ with $S$-norm at most one, there are infinitely many $(\q,q_0)\in \mathcal{R}^n\times\mathcal{R}$ such that
$$ |\q\cdot\xi+q_0|_S^{\kappa}\le \|\q\|_{S}^{-ng(\mathcal{R})}\hspace{1cm}\mbox{if}\hspace{.2cm}\nu_{\infty}\in S,$$
$$|\q\cdot\xi+q_0|_S^{\kappa}\le \|(\q,q_0)\|_{\tilde{S}}^{-(n+1)g(\mathcal{R})}\hspace{1cm}\mbox{if}\hspace{.2cm}\nu_{\infty}\not\in S.$$
Accordingly, one can define the notion of $\mathcal{R}$-VWA; namely, $\xi\in\bbq_S^n$ is called $\mathcal{R}$-VWA if for some $\vare>0$, there are infinitely many $(\q,q_0)\in\mathcal{R}^n\times\mathcal{R}$ such that
$$ |\q\cdot\xi+q_0|_S^{\kappa}\le \|\q\|_{S}^{-ng(\mathcal{R})-\vare}\hspace{1cm}\mbox{if}\hspace{.2cm}\nu_{\infty}\in S\hspace{.3cm},$$
$$|\q\cdot\xi+q_0|_S^{\kappa}\le \|(\q,q_0)\|_{\tilde{S}}^{-(n+1)g(\mathcal{R})-\vare}\hspace{1cm}\mbox{if}\hspace{.2cm}\nu_{\infty}\not\in S.$$
A manifold $M\subseteq\bbq_S^n$ is called $\mathcal{R}$-extremal if almost no point of it is $\mathcal{R}$-VWA.

One can rephrase the result of Kleinbock and Tomanov, theorem A, and say that any non-degenerate $C^k$ manifold is $\bbz$-extremal. In fact, it is easy to adapt their proof and show that any such manifold is $\mathcal{R}$-extremal, for any subring $\mathcal{R}$ of $\bbz_{\tilde{S}}$. 

Assume that $S$ contains the Archimedian place. Let $\Psi$ be a map from $\mathcal{R}^n$ to $\bbr^+$. A vector $\xi\in \bbq_S^n$ is called $(\Psi,\mathcal{R})$-A if for infinitely many $(\q,q_0)\in\mathcal{R}^n\times\mathcal{R}$ one has $|\q\cdot\xi+q_0|_S^{\kappa}\le\Psi(\q).$
In this article, we prove an \textit{$S$-arithmetic $\mathcal{R}$-Khintchine-type} statement. Let us fix a few notations before stating the precise statements.
\\

\noindent
{\bf a) Places:} $S$: a finite set of places containing the Archimedian place.
\\

\noindent
{\bf b-1) Domain:} $\mathbf{U}=\p U_{\nu}$ where $U_{\nu}\subseteq \bbq_{\nu}^{d_{\nu}}$ is an open box.
\\

\noindent
{\bf b-2) Coordinates:} $\mathbf{f}(\x)=(f_{\nu}(x_{\nu}))_{\nu\in S}$, for any $\x=(x_{\nu})\in
\mathbf{U}$, where

\begin{itemize}\item[i)] $f_{\nu}=(f_{\nu}^{(1)},\cdots,f_{\nu}^{(n)}): U_{\nu} \rightarrow
\bbq_{\nu}^n$: analytic map for any $\nu\in S$, and can be analytically extended to the boundary of $U_{\nu}$.
\item[ii)] Restrictions of $1,\hspace{.5mm} f_{\nu}^{(1)},\cdots,f_{\nu}^{(n)}$ to any open subset of $U_{\nu}$ are linearly independent over $\mathbb{Q}_{\nu}$. 
\item[iii)] $\para f_{\nu}(x_{\nu})\para \le 1,$ $\para \g
f_{\nu}(x_{\nu})\para\le 1$, and $|\bar{\Phi}_{\beta}f_{\nu}(y_1,y_2,y_3)|\le 1/2$ for any $\nu \in S$, second difference quotient  $\Phi_\beta$, and $x_{\nu}, y_1,y_2,y_3\in U_{\nu}$ (For the definition of $\Phi_{\beta}$, we refer the reader to the second section). 
\end{itemize}

\noindent
{\bf c) Ring:} $\mathcal{R}$ is a subring of $\bbz_S$, and so for some $S_{\mathcal{R}}\subseteq S$, we have $\mathcal{R}=\bbz_{S_{\mathcal{R}}}$. Let ${S_{\mathcal{R}}}^c$ be the complement of $S_{\mathcal{R}}$ in $S$.
\\

\noindent
{\bf d) Level of approximation:} $\Psi:\mathcal{R}^n\setminus\{0\}\rightarrow(0,\infty)$ satisfies
\begin{itemize}
\item[i)]$\Psi(q_1,\cdots,q_i,\cdots,q_n)\ge\Psi(q_1,\cdots,q_i',\cdots,q_n)$
whenever $|q_i|_S\le|q_i'|_S$.
\item[ii)]$\sum_{\mathbf{q}\in\mathcal{R}^n\setminus\{0\}}\Psi(\mathbf{q})<\infty$.
\end{itemize}

\begin{thm}~\label{khintchine}
Let $S$, $\U,$ $\fbf,$ $\Psi$, and $\mathcal{R}$ be as above; then $$\mathcal{W}_{\mathcal{R},\Psi}^{\hspace{1mm}\fbf}=\{\x\h|\h\fbf(\x)\h\mbox{is}\h (\Psi,\mathcal{R})-A\}$$
has measure zero.
\end{thm}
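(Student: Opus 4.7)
I would follow the classical Borel--Cantelli and quantitative-non-divergence scheme of Bernik--Kleinbock--Margulis, adapted to the $S$-arithmetic setting as in Kleinbock--Tomanov's proof of Theorem~A. For each $(\q,q_0)\in(\mathcal{R}^n\setminus\{0\})\times\mathcal{R}$ set
\[ A(\q,q_0)=\{\x\in\U : |\q\cdot\fbf(\x)+q_0|_S^{\kappa}\le\Psi(\q)\}, \]
so that $\mathcal{W}_{\mathcal{R},\Psi}^{\fbf}\cap\U=\limsup_{(\q,q_0)}A(\q,q_0)$. By Borel--Cantelli it therefore suffices to prove
\[ \sum_{\q\in\mathcal{R}^n\setminus\{0\}}\sum_{q_0\in\mathcal{R}}|A(\q,q_0)|<\infty. \]

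The first reduction controls the inner sum. Since $\fbf$ is bounded on $\U$ by condition b-2-iii, the set of $q_0$ for which $A(\q,q_0)\neq\emptyset$ is contained in a $\bbq_S$-ball of radius $O(\|\q\|_S)$, which contains at most $O(\|\q\|_S^{g(\mathcal{R})})$ elements of $\mathcal{R}=\bbz_{S_{\mathcal{R}}}$. It is enough to prove a uniform pointwise bound of the shape
\[ |A(\q,q_0)|\ll \Psi(\q)\,\|\q\|_S^{-g(\mathcal{R})} \]
(possibly up to individually summable error terms); summability of $\sum_{\q}\Psi(\q)$ then closes the argument.

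To obtain the pointwise bound I would cover $\U$ by a bounded collection of small product boxes $B=\prod_{\nu\in S}B_\nu$ on which $\g\fbf$ is nearly constant, and on each $B$ split the analysis into two regimes for $\phi_{\q,q_0}(\x)=\q\cdot\fbf(\x)+q_0$. In the \emph{large-gradient} regime, where $\|\g\phi_{\q,q_0}\|_\nu$ exceeds a suitable threshold at each $\nu\in S$, a Taylor/implicit-function estimate factorised across $S$ gives the required bound on $|A(\q,q_0)\cap B|$ directly. In the \emph{small-gradient} regime I appeal to an $S$-arithmetic quantitative non-divergence estimate on $\mathrm{SL}_{n+1}(\bbq_{\tilde{S}})/\mathrm{SL}_{n+1}(\bbz_{\tilde{S}})$: the simultaneous smallness of $\phi_{\q,q_0}$ and of its gradient is encoded as a cusp excursion of the lattice orbit attached to $\fbf(\x)$ under a carefully chosen diagonal element; the $(C,\alpha)$-good property of polynomials together with the non-degeneracy hypothesis (condition b-2-ii) then forces the orbit to avoid the cusp on all but a small measure of $B$. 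Particular attention is needed at places $\nu\in S_{\mathcal{R}}^c$, where $\mathcal{R}$ contributes no new denominators and the analysis of $|\phi_{\q,q_0}|_\nu$ is purely analytic.

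The main obstacle is the small-gradient estimate: one has to select the diagonal element with $\nu$-components simultaneously balanced against $\Psi(\q)$ and $\|\q\|_S$ so that the non-divergence theorem outputs exactly the shape $\Psi(\q)\,\|\q\|_S^{-g(\mathcal{R})}$ required above. Coordinating this balance across the Archimedean place, the non-Archimedean places in $S_{\mathcal{R}}$, and the non-Archimedean places in $S_{\mathcal{R}}^c$ is the essential new feature beyond the extremality argument for Theorem~A, and is where I expect the bulk of the technical work to sit.
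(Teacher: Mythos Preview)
Your overall architecture---Borel--Cantelli, a split into large-gradient and small-gradient regimes, a Taylor/mean-value estimate for the former and quantitative non-divergence for the latter---is exactly the paper's strategy (the reduction to Theorems~\ref{>} and~\ref{<} carried out in Section~7). But there is a genuine misconception in how you expect the pieces to combine.

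You anticipate that the non-divergence estimate in the small-gradient regime should output a bound of the shape $\Psi(\q)\,\|\q\|_S^{-g(\mathcal{R})}$, and you identify ``balancing the diagonal against $\Psi(\q)$'' as the main obstacle. This is not how the argument runs. The small-gradient contribution carries \emph{no} dependence on $\Psi$: after grouping $\q$ into dyadic boxes $2^{t_i}\le|q_i|_S<2^{t_i+1}$ and invoking Theorem~\ref{<} with gradient thresholds $K_\nu=\|\q\|_S^{1-\epsilon}$ for $\nu\in S_{\mathcal{R}}$ and $K_\nu=\|\q\|_S^{-\epsilon}$ for $\nu\in {S_{\mathcal{R}}}^c$, one obtains a bound of order $2^{-c\|\mathbf{t}\|}$ for some $c>0$, which is summable over $\mathbf{t}\in\bbn^n$ outright. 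The convergence hypothesis $\sum_\q\Psi(\q)<\infty$ is used \emph{only} in the large-gradient regime, where Theorem~\ref{>} gives $|\bar A_{\ge\mathbf{t}}|\ll 2^{g(\mathcal{R})\sum_i t_i}\,\Psi(2^{t_1},\dots,2^{t_n})$, and a Cauchy-condensation step (relying on the monotonicity assumption (d-i), which your plan never invokes) converts $\sum_\q\Psi(\q)<\infty$ into summability of this dyadic sum. So the diagonal element in the non-divergence step is balanced against the $T_i$ and $K_\nu$, not against $\Psi$. A related issue: your separate sum over $q_0$ with the crude count $O(\|\q\|_S^{g(\mathcal{R})})$ forces you toward the uniform target $|A(\q,q_0)|\ll\Psi(\q)\,\|\q\|_S^{-g(\mathcal{R})}$, which is not what is actually proved; the paper works with the distance $\l\fbf(\x)\cdot\q\r$ directly and shows (First Step in the proof of Theorem~\ref{>}) that on each small ball at most one $q_0$ is relevant, yielding $|A_\q|\ll\Psi(\q)$ for the union over $q_0$.
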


\begin{remark*}

\begin{itemize}
\item[1-]This theorem has been proved when $S=\{\nu_{\infty}\}$ by Bernik, Kleinbock and Margulis~\cite{BKM}.
\item[2-]As we mentioned earlier, for $\Psi(\q)=\|\q\|_S^{-ng(\mathcal{R})-\vare}$, where $\vare$ is a positive number, Kleinbock and Tomanov~\cite{KT} essentially proved this theorem.
\item[3-]Kleinbock and Tomanov~\cite{KT} asked for such a theorem for $\mathcal{R}=\bbz$.
\item[4-]It is clear that the above theorem holds for product of any non-degenerate $\nu$-adic analytic manifolds. Indeed, the condition on the domain or the first two conditions on the coordinate functions are consequences of analyticity and non-degeneracy of the manifold. The last condition on the coordinate functions can be achieved by replacing $\fbf$ with $\fbf/M$ for a suitable $S$-integer $M$. 
\item[5-](With or without the Archimedian place) As we have seen in the introduction, there is an intrinsic difference between the cases either with or without Archimedian places even though the methods are somewhat similar. For instance in the setting of this paper, namely when $\nu_{\infty}\in S$, we formulate and prove a simultaneous approximation with coordinates in any subring $\mathcal{R}$. However when $\nu_{\infty}\not\in S,$ we can formulate and prove such a theorem only for $\mathcal{R}=\bbz$, in~\cite{MS}, see the remarks at the end of this paper for the precise statement. 
\item[6-](Divergence) Following the above remark, we should also mention that in~\cite{MS} we also prove the divergence counter part as long as we deal with only one non-Archimedian place. Our argument comes short of proving the divergence counter part of simultaneous approximations. In particular, in the setting of this paper, namely when $S$ contains the Archimedian place and a non-Archimedian place, we do not get the divergence part.  
\end{itemize}

\end{remark*}
 
{\bf Idea of the proof of theorem~\ref{khintchine}.} 
We essentially follow the same stream line of the 
proof of~\cite{BKM}. However we have to do careful analysis on families of $p$-adic $C^k$ functions. Spaces over $p$-adic fields are ``easier" when one deals with number theoretic properties. However the analysis in some problems gets subtle as we have neither {\it the notion of angle} nor {\it connectedness}! So almost all the steps need a new approach or at least perspective.

For a fixed $\q\in\mathcal{R}^n\setminus 0$ we study the behavior of the function $\fbf(\x)\cdot\q$, and the philosophy is that when the gradient of this function is ``large", the value of the function cannot be  close to $\mathcal{R}$  for a ``long" time. This will take care of points with ``large" gradient. Hence we need to deal with the points with ``small" gradient. To do so, we use a quantitative version of recurrence of polynomial-like maps on the space of $S$-arithmetic modules. What is roughly explained here is the process of reducing the proof of theorem~\ref{khintchine} to the following theorems, modulo Borel-Cantelli Lemma.  

\begin{thm}~\label{>}
Let $\mathbf{U}$ and $\hspace{1mm}\mathbf{f}$ be as above and $0<\epsilon< \frac{1}{4n|S|(1+|{S_{\mathcal{R}}}^c|)}.$  Let $\mathcal{A}$ be
 $$\left\{\x\in\U|\h\exists\hspace{.5mm}\q\in\mathcal{R}^n,\h \frac{T_i}{2}\leq\h|q_i|_{S}<T_i,\hspace{.5mm}\begin{array}{l}\l\mathbf{f}(\x)\cdot\q\r_{S}^{|S|}<\delta(\prod_i T_i)^{-g(\mathcal{R})}\\
\|\g f_{\nu}(x_{\nu})\q\|_{\nu}>\|\q\|_S^{-\epsilon},\hspace{4mm}\nu\in {S_{\mathcal{R}}}^c\h \\ \|\g f_{\nu}(x_{\nu})\q\|_{\nu}>\|\q\|_S^{1-\epsilon},\hspace{3mm}\nu\in S_{\mathcal{R}}\end{array}\right\};$$ then
 $|\mathcal{A}|<C \delta\hspace{1mm}|\U|,$ for large enough $\max (T_i)$ and a universal constant $C$. 
\end{thm}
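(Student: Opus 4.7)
The plan is to decompose $\mathcal{A}$ by rational approximants $(\q,r)\in\mathcal{R}^n\times\mathcal{R}$ and bound the contribution of each using the large-gradient hypothesis, following the strategy of~\cite{BKM}. I would first unwind the quasinorm $\l\cdot\r_S$: since it measures the $S$-distance to $\mathcal{R}$, any $\x\in\mathcal{A}$ admits some $r\in\mathcal{R}$ satisfying $\prod_{\nu\in S}|f_\nu(x_\nu)\cdot\q - r|_\nu<D$, where $D:=\delta^{1/|S|}(\prod_i T_i)^{-g(\mathcal{R})/|S|}$. Thus $\mathcal{A}\subseteq\bigcup_{\q,r}\mathcal{A}_{\q,r}$, with the union ranging over $\q\in\mathcal{R}^n$ satisfying $|q_i|_S\in[T_i/2,T_i)$ and over $r\in\mathcal{R}$.

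Next I would cover each $U_\nu$ by balls $B_\nu(x_\nu^*,\rho_\nu)$ of radius $\rho_\nu\asymp\|\g f_\nu(x_\nu^*)\q\|_\nu/\|\q\|_\nu$, on which condition iii) guarantees that $f_\nu\cdot\q$ is well-approximated by its first-order Taylor expansion at $x_\nu^*$. On a product ball $B=\prod_\nu B_\nu$, a change of variable in the $\g f_\nu\q$-direction (Jacobian $\|\g f_\nu\q\|_\nu^{-1}$) reduces the estimation of $|\mathcal{A}_{\q,r}\cap B|$ to computing a hyperbola volume $|\{u\in\prod_\nu\bbq_\nu:\prod|u_\nu|_\nu<D\}|$ inside a product box, which a standard Fubini argument bounds by $D$ up to logarithmic factors, giving
\[
|\mathcal{A}_{\q,r}\cap B|\;\le\; C\cdot D\cdot\prod_\nu\rho_\nu^{d_\nu-1}/\|\g f_\nu(x_\nu^*)\q\|_\nu.
\]
For each $B$ the admissible $r$'s lie in a product box of $S$-size $\prod_\nu\|\g f_\nu\q\|_\nu\rho_\nu$ centred at $\fbf(x^*)\cdot\q$, and their number is controlled via the estimate $|B_R\cap\mathcal{R}|<2R^{g(\mathcal{R})}$.

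Summing over the $\sim|\U|/\prod_\nu\rho_\nu^{d_\nu}$ balls, then over the admissible $r$'s, and finally over $\q$ (for which there are $\lesssim\prod_i T_i^{g(\mathcal{R})}$ choices), and inserting the gradient lower bounds $\|\g f_\nu(x_\nu)\q\|_\nu>\|\q\|_S^{-\epsilon}$ for $\nu\in{S_\mathcal{R}}^c$ and $>\|\q\|_S^{1-\epsilon}$ for $\nu\in S_\mathcal{R}$ to absorb the denominators, one should obtain $|\mathcal{A}|\le C\delta|\U|$, with the lower-order polynomial and logarithmic factors in $\max_iT_i$ absorbed into the constant once $\max_i T_i$ is large enough.

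The main obstacle is the exponent accounting in the final summation. The hypothesis $\epsilon<1/(4n|S|(1+|{S_\mathcal{R}}^c|))$ is calibrated precisely to balance (i) the $|S|$-th-root loss coming from $\l\cdot\r_S^{|S|}<\delta(\prod T_i)^{-g(\mathcal{R})}$, (ii) the $g(\mathcal{R})$-weight in counting both $\q$ and $r$ via $|B_R\cap\mathcal{R}|<2R^{g(\mathcal{R})}$, and (iii) the asymmetry between $S_\mathcal{R}$ and ${S_\mathcal{R}}^c$ in the gradient hypothesis, which in turn reflects the fact that $|q_i|_\nu\le 1$ for $\nu\in{S_\mathcal{R}}^c$ but can be as large as $\|\q\|_S$ for $\nu\in S_\mathcal{R}$. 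Arranging these three inputs so that they cancel cleanly against the $\delta$-saving is where the real work lies.
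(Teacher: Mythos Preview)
Your outline diverges from the paper's argument in two essential ways, and one of them is a genuine gap. First, $\l\cdot\r_S$ is the \emph{max}-norm distance to $\mathcal{R}$, not a product; the condition $\l\fbf(\x)\cdot\q\r_S^{\kappa}<\delta T^{-g(\mathcal{R})}$ gives $|f_\nu(x_\nu)\cdot\q-q_0|_\nu<(\delta T^{-g(\mathcal{R})})^{1/\kappa}$ at \emph{every} place simultaneously. The paper exploits this by working place-by-place: it bounds each projection $|\pi_\nu(\mathcal{A}_\q)|$ separately (via Besicovitch and a one-directional slicing along the largest partial derivative) and then uses $|\mathcal{A}_\q|\le\prod_\nu|\pi_\nu(\mathcal{A}_\q)|$. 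Your hyperbola-volume route throws away the max-norm information for a product condition and thereby manufactures logarithmic factors in $T$; since the target bound is exactly $C\delta|\U|$ with $C$ universal, these cannot be ``absorbed once $\max_iT_i$ is large enough'' --- they grow with $T$, and the $\epsilon$-slack in the hypothesis is not used in the paper to produce a compensating polynomial saving but only to control Taylor remainders.

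Second, the paper never counts the approximants $r$. Instead it chooses, for each $\x\in\mathcal{A}_\q$, a neighborhood $\bbf(\x)$ with \emph{asymmetric} radii (of order $R^{-1/\widetilde{\kappa_{\mathcal{R}}}}\|\nabla g_\nu\|_\nu^{-1}$ for $\nu\in S_\mathcal{R}^c$ and $R^{1/\kappa_{\mathcal{R}}}\|\nabla g_\nu\|_\nu^{-1}$ for $\nu\in S_\mathcal{R}$) so that on $\bbf(\x)$ the value $\fbf\cdot\q$ stays within a box whose $S$-content is $<1$; the product formula then forces the approximant $q_0$ to be \emph{unique} on $\bbf(\x)$. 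These same asymmetric radii produce extra factors $r_\nu$ in the per-place measure bound that are designed to satisfy $\prod_\nu r_\nu=1$, which is how the $S_\mathcal{R}$/$S_\mathcal{R}^c$ asymmetry and the $\epsilon$-condition actually enter. Your choice $\rho_\nu\asymp\|\nabla f_\nu\q\|_\nu/\|\q\|_\nu$ neither guarantees uniqueness of $r$ nor produces the cancellation, and the separate counting of $r$'s via $|B_R\cap\mathcal{R}|<2R^{g(\mathcal{R})}$ would reintroduce losses you then have to fight against.
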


For the convenience of the reader, let us recall that $\mathcal{R}=\bbz_{S_{\mathcal{R}}}$, and the growth of the numbers of elements of $\mathcal{R}$ in a ball of radius $T$ in $\bbq_S$ is a polynomial on $T$ of degree $g(\mathcal{R})=|S_{\mathcal{R}}|$. Let us also add that whenever needed we view a vector as a column or a row matrix.

\begin{thm}~\label{<}
Let $\U$ and $\mathbf{f}$ be as before. If $\| \fbf(\x)\|\le 1$ and $\|\g\fbf(\x)\|\le 1$, then for any
$\x=(x_{\nu})_{\nu\in S}\in \U$, one can find a neighborhood
$\mathbf{V}=\p V_{\nu}\subseteq \U$ of $\x$ and $\alpha>0$ with
the following property: For any ball $\mathbf{B}\subseteq \mathbf{V}$,
there exists $E>0$ such that for any choice of $0<\delta\le 1$,
$T_1,\cdots,T_n\ge 1$, and $K_{\nu}>0$ with $\delta^{|S|}{ (\frac{T_1\cdots
T_n}{\max_i T_i})}^{g(\mathcal{R})}\p K_{\nu}\le 1$ one has
$$\left|\left\{\x\in\mathbf{B}|\hspace{1mm}\exists
\q\in\mathcal{R}^n\setminus\{0\}:\begin{array}{l}\l\mathbf{f}(\x)\cdot\q\r<\delta\\
\|\g f_{\nu}(x)\q\|_{\nu}<K_{\nu},\hspace{2mm}\nu\in S\\|q_i|_S<T_i\end{array}\right\}\right|\le
E\hspace{.5mm}\vare^{\alpha}|\mathbf{B}|,\hspace{5mm}~(\ref{<})$$ where
$\vare=\max\{\delta,(\delta^{|S|}{ (\frac{T_1\cdots
T_n}{\max_i T_i})}^{g(\mathcal{R})}\p K_{\nu})^{\frac{1}{\kappa(n+1)}}\}.$
\end{thm}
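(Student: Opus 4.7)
The plan is to follow the Kleinbock-Margulis / Kleinbock-Tomanov dynamical strategy: recast the measure bound as a quantitative non-divergence statement for orbits of the form $\{gu(\x)\mathcal{R}^{n+1}\}_{\x\in\mathbf{B}}$ in the space of rank-$(n+1)$ $\mathcal{R}$-modules in $\bbq_S^{n+1}$, and then invoke the $S$-arithmetic non-divergence theorem for $(C,\alpha)$-good maps.

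\textbf{Dynamical encoding.} First, I would assign to each $\x\in\U$ an $S$-tuple $u(\x)=(u_\nu(x_\nu))_{\nu\in S}$ of unitriangular matrices in $\GL_{n+1}(\bbq_S)$ whose non-trivial entries are the components of $f_\nu(x_\nu)$ and $\g f_\nu(x_\nu)$, and a diagonal matrix $g=g_{\delta,\mathbf{T},\mathbf{K}}$ whose entries are $\delta^{-1}$, $K_\nu^{-1}$ and $T_i^{-1}$ in the slots dual to the three sets of inequalities. With this setup, a non-zero $(\q,q_0)\in\mathcal{R}^{n+1}$ witnesses $\x$ in the bad set of (\ref{<}) exactly when $gu(\x)\cdot(\q,q_0)$ lies in the $S$-unit box of $\bbq_S^{n+1}$.

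\textbf{Non-divergence input.} Second, I would apply the $S$-arithmetic non-divergence theorem of Kleinbock-Tomanov~\cite{KT}: for a $(C,\alpha)$-good map $\x\mapsto gu(\x)$ with every primitive-submodule covolume bounded below, one has
$$\left|\{\x\in\mathbf{B}\;:\;gu(\x)\mathcal{R}^{n+1}\text{ meets an }S\text{-ball of radius }\rho\}\right|\le E\rho^{\alpha}|\mathbf{B}|.$$
Tracking the scaling back through $g$ and optimising the Minkowski-type bound (first-coordinate radius $\delta$ against a volume root of the total box) under the hypothesis $\delta^{|S|}(T_1\cdots T_n/\max_i T_i)^{g(\mathcal{R})}\p K_\nu\le 1$ produces the $\varepsilon=\max\{\delta,(\cdots)^{1/\kappa(n+1)}\}$ stated in the theorem; the exponent $\kappa(n+1)$ arises from spreading the total volume over $\kappa=|S|$ places and $n+1$ coordinates.

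\textbf{Verifications and the main obstacle.} The $(C,\alpha)$-good property on each $\nu\in S$ follows from the standard Kleinbock-Tomanov lemma on $\nu$-adic analytic functions; hypothesis (iii) bounding the second difference quotient is precisely what gives uniform constants across $\U$. The main obstacle will be the lower bound on covolumes: for every primitive $\mathcal{R}$-submodule $\Lambda\subset\mathcal{R}^{n+1}$ of every rank one must show that $\x\mapsto\para gu(\x)\Lambda\para$ stays above a fixed constant on some neighbourhood $\mathbf{V}$ of the chosen base point. The linear-independence hypothesis (ii) rules out identical vanishing on any open set, and $\nu$-adic analyticity promotes non-vanishing at a single point to a uniform lower bound on a small enough $\mathbf{V}$; since $\bbq_\nu$ carries neither connectedness nor angles, this promotion has to be carried out via explicit Taylor expansions of $f_\nu$ at each place separately, and is the technically most delicate step.
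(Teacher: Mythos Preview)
Your overall strategy---encode the three constraints via a unipotent--diagonal pair acting on an $\mathcal{R}$-lattice and invoke the Kleinbock--Tomanov non-divergence theorem---is indeed the paper's route (Sections~5--6). But two points in your sketch are genuine gaps, not just missing details.

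First, the encoding cannot take place in $\GL_{n+1}(\bbq_S)$. The gradient constraint $\|\g f_\nu(x_\nu)\q\|_\nu<K_\nu$ is a $d_\nu$-dimensional condition at each place, so the paper works in $\prod_\nu\GL_{m_\nu}(\bbq_\nu)$ with $m_\nu=n+d_\nu+1$; the extra $d_\nu$ ``starred'' coordinates carry $\g f_\nu(x_\nu)\q$. Moreover the norm-like map $\theta$ is \emph{not} the ordinary covolume: one first quotients $\bigwedge\bbq_\nu^{m_\nu}$ by the ideal generated by $e_\nu^{*i}\wedge e_\nu^{*j}$ before taking norms. Without this projection the higher-rank wedge functions acquire cross-terms that are neither $(C,\alpha)$-good nor uniformly bounded below.

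Second, and more seriously, the $(C,\alpha)$-good verification for primitive submodules of rank $\ge 2$ is not covered by any ``standard Kleinbock--Tomanov lemma''. Expanding $\pi_\nu((H(\x)\mathcal{Y})_\nu)$ for a rank-$k$ primitive $\Delta$ produces, besides linear combinations of $1,f_\nu^{(1)},\dots,f_\nu^{(n)}$, \emph{skew-gradient} terms $\widetilde{\nabla}(g_1,g_2)=g_1\nabla g_2-g_2\nabla g_1$ with $(g_1,g_2)$ ranging over the family $\{(D\cdot F,\,D'\cdot F+a):\|D\|=\|D'\|=\|D\wedge D'\|=1,\ a\in\bbq_\nu\}$. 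This family is non-compact in $a$, and establishing uniform $(C,\alpha)$-goodness of $\|\widetilde{\nabla}(g_1,g_2)\|$ is Theorem~\ref{tartibat}(ii), which the paper itself calls ``the core of the proof of theorem~\ref{<}''. Its proof splits into a bounded-$|a|$ part (Lemma~\ref{bounded}, where analyticity is used in an essential way) and an unbounded-$|a|$ part (Lemma~\ref{unbounded}, via rescaling and comparison with Taylor polynomials together with Lemma~\ref{del-good}). The same theorem, part~(i), supplies the uniform lower bound $\rho$ needed for condition~(III): your Taylor-expansion idea would handle rank~$1$, but for higher rank the quantity that survives the diagonal contraction is precisely $\sup_{\bbf}\|\widetilde{\nabla}g\|$, and bounding it away from zero over the whole family $\mathcal{F}$ is again Theorem~\ref{tartibat}, not a direct consequence of linear independence.
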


Theorem~\ref{<}  is proved using dynamics of special unipotent flows and $S$-arithmetic version of Kleinbock-Margulis lemma, which was proved in~\cite{KT}.

{\bf Structure of the paper.}
In section 2, we start with some geometry and analysis of $p$-adic spaces, and continue observing some of the properties of discrete $\bbz_S$-submodules of $\p \bbq_{\nu}^{m_{\nu}}$. Section 3 is devoted to the proof of theorem~\ref{>}. In section 4, we recall the notion of good functions and establish the ``goodness" of families of $\nu$-adic analytic functions, which will be needed in the proof of theorem~\ref{<}. This technical section, in some sense, is the core of the proof of theorem~\ref{<} modulo theorem~\ref{poset}. In section 5, we translate theorem~\ref{<} in terms of recurrence of special flows on the space of discrete $\bbz_S$-submodules of $\p \bbq_{\nu}^{m_{\nu}}$. In section 6, we shall recall a theorem of  Kleinbock-Tomanov, and use it to establish theorem~\ref{<} proving its equivalence in the dynamical language. The proof of the main theorem will be completed in section 7. We shall finish the paper by discussing a few remarks, and open problems.

{\bf Acknowledgments.}
Authors would like to thank G.~A.~Margulis for introducing this topic and suggesting this problem to them. We are also in debt to D.~Kleinbock for reading the first draft and useful discussions. We also thank the anonymous referee(s) for their remarks and suggestions. 

%%%%%%%%%%%%%%%%%%%%%%%%%%%%%%%%%%%%%%%%%%%%%%%%%%%%%%%%%%%%%%%%%%%%%%%%%%%
\section{Notations and Preliminaries}
%%%%%%%%%%%%%%%%%%%%%%%%%% Orthogonal basis %%%%%%%%%%%%%%%%%%%%%%%%%%%%%%%%
\textit{\textbf{Geometry of $p$-adic spaces.}} For any place $\nu$
of the field of rational numbers $\bbq$, $\bbq_{\nu}$ denotes the
$\nu$-completion of $\bbq$. In particular, when $\nu$ is the
Archimedian place of $\bbq$, $\bbq_{\nu}$ is the field of real
numbers $\bbr$. A non-Archimedian place (resp. the Archimedian
place) is also called a finite (resp. infinite) place. Let $p_{\nu}$ be the number of elements of the residue field of $\bbq_{\nu}$ if $\nu$ is a finite place. For $a$ a positive real
number and $\nu$ a finite place of $\bbq$, let $\lceil
a\rceil_{\nu}$ (resp. $\lfloor a\rfloor_{\nu}$) denote a power of
$p_{\nu}$ with the smallest (resp. largest) $\nu$-adic norm bigger
(resp. smaller) than $a$. For any ring $\mathcal{R}$ and two
vectors $x,y\in \mathcal{R}^n$, we set $x\cdot y=\sum_{i=1}^n
x^{(i)}y^{(i)}$. Let $\nu$
be a place of $\bbq$ and $\mathcal{V}$ a vector space over
$\bbq_{\nu}$.
 For any subset $\mathcal{X}$ of $\mathcal{V}$,
$\mathcal{X}_{\bbq_{\nu}}$ (resp. $\mathcal{X}_{\bbz_{\nu}}$) denotes the $\bbq_{\nu}$ (resp. $\bbz_{\nu}$) span of $\mathcal{X}$. We recall the notion of
distance and orthogonality on $\mathcal{V}$ even if $\nu$ is a finite place. In the infinite place we take the Euclidean
norm on $\mathcal{V}$, and in a finite place, for the notion of
distance, we take a $\bbq_{\nu}$ basis $\bfr$ for
$\mathcal{V}$, and define the maximum norm
$\para\cdot\para_{\bfr}$ with respect to $\bfr$ on $\mathcal{V}$.
For the space $\bbq_{\nu}^{m}$, one can consider the norm with
respect to the standard basis, and in this case we drop $\bfr$ from the notation. Any basis for
$\mathcal{V}$ gives rise to a basis for $\bigwedge \mathcal{V}$,
so we can extend the corresponding norm on $\mathcal{V}$ to a norm on
$\bigwedge \mathcal{V}$, and we do so. The following definition and/or lemma gives us the notion of orthogonality. 

\begin{definition}~\label{orthogonal}
Let $\nu$ be a finite place of $\bbq$. A set of vectors ${x_1, \cdots, x_n}$
in $\bbq _{\nu}^m$, is called orthonormal if  $ \|x_1\| = \|x_2\| = \cdots =
\|x_n\| = \|x_1 \wedge \cdots \wedge x_n\| = 1$, or equivalently when it
can be extended to a $\bbz_{\nu}$-basis of $\bbz_{\nu}^m$.
\end{definition}

%%%%%%%%%%%%%%%%%%%%%%%%%%%% p-adic calculus %%%%%%%%%%%%%%%%%%%%%%%%%%%%%%%%%%%%%%
\textbf{\textit{Calculus of functions on $p$-adic spaces.}} Here
 we recall the definition of $p$-adic $C^k$ functions, and refer the reader to \cite{Sc} for further reading. Let $F$ be a local field and $f$ an $F$-valued function defined
on $U$ an open subset  of $F$. The first difference quotient
$\Phi^1 f$ of $f$ is a two variable function given by
$$\Phi^1 f(x,y):=\frac{f(x)-f(y)}{x-y}
\hspace{2mm},$$ defined on $\nabla^2 U:=\{(x,y)\in U\times
U|\hspace{1mm}x\neq y\}.$ We say, $f$ is $C^1$ at $a\in U$ if
$$\lim_{(x,y)\rightarrow (a,a)}\hspace{2mm}\Phi^1 f(x,y)$$ exists,
and $f$ is said to be $C^1$ on $U,$ if it is $C^1$ at every
point of $U.$ Now let $$\nabla^k U:=\{(x_1,\cdots,x_k)\in
U^k|\hspace{1mm}x_i\neq x_j
\hspace{1mm}\mbox{\rm{for}}\hspace{1mm} i\neq j\},$$ and define
the $k^{th}$ order difference quotient $\Phi^k f:
\nabla^{k+1}U\rightarrow F$ of $f$ inductively by $\Phi^0 f= f$
and $$\Phi^k
f(x_1,x_2,\cdots,x_{k+1}):=\frac{\Phi^{k-1}f(x_1,x_3,\cdots,x_{k+1})-\Phi^{k-1}f(x_2,x_3,\cdots,x_{k+1})}{x_1-x_2}.$$
One readily sees any other pair could be taken instead of
$(x_1,x_2)$, and so $\Phi^k f$ is a symmetric function of its
$k+1$ variables.  $f$ is called $C^k$ at $a\in U$ if the following
limit exits
$$\lim_{(x_1,\cdots,x_{k+1})\rightarrow(a,\cdots,a)} \Phi^k
f(x_1,\cdots,x_{k+1}),$$ and it is called $C^k$ on $U$ if it is $C^k$
at every point $a\in U$. This is equivalent to $\Phi^k f$
being continuously  extendable to $\bar{\Phi}^k f:U^{k+1}\rightarrow F.$ Clearly the continuous extension is unique. As one expects $C^k$ functions are $k$ times
differentiable, and
$$f^{(k)}(x)=k!\bar{\Phi}^k(x,\cdots,x).$$ It is worth mentioning
that, $f\in C^k$ implies $f^{(k)}$ is continuous but the converse
fails. Also $C^{\infty}(U)$ is defined to be the class of
functions which are $C^k$ on $U$, for any $k$. Note that 
analytic functions are $C^{\infty}.$

Now it is straightforward to generalize this to several variables.
Let $f$ be an $F$-valued function defined on $U_1\times\cdots\times U_d$,
where each $U_i$ is an open subset of $F.$ Denote by
$\Phi_{i}^{k} f$ the $k^{th}$ order difference quotient of $f$
with respect to the $i^{th}$ coordinate. Then for any multi-index
$\beta=(i_1,\cdots,i_d)$ let
$$\Phi_{\beta}f:=\Phi_1^{i_1}\circ\cdots\circ\Phi_d^{i_d} f.$$ It
is defined on $\nabla^{i_1+1} U_1\times\cdots\times\nabla^{i_d+1}
U_d$, and as above the order is not important. The function $f$ is called
$C^k$ on $U_1\times\cdots\times U_d$ if for any multi-index $\beta$ with
$|\beta|=\sum_{j=1}^d i_j$ at most $k,\hspace{1mm} \Phi_{\beta} f$
is extendable to $\bar{\Phi}_{\beta} f$ on
$U_1^{i_1+1}\times\cdots\times U_d^{i_d+1}.$ Similarly to the one
variable case the obvious relation between $\bar{\Phi}_{\beta} f$
and $\partial_{\beta} f$ holds.

For a $C^1$ function $f=(f_1,\cdots,f_n)$ from $F^m$ to $F^n$, let $\g f(x)$ be an $m$ by $n$ matrix whose $(i, j)$ entry is $\partial_j f_i(x)$. 

\vspace{1mm}

%%%%%%%%%%%%%%%%%%%%%% Discrete submodules of *Q_v^d_v %%%%%%%%%%%%%%%%%%%%%%%%%%%%
\textbf{\textit{Discrete $\bbz_S$-submodules of $\p
\bbq_{\nu}^{m_{\nu}}$.}}
 For any finite set $S$ of places of $\bbq$ which contains the
  infinite place $\infty$, set $S_f=S\setminus\{\infty\}$, 
  $\bbq_S=\p \bbq_{\nu}$, and $\bbz_S=\bbq\cap(\bbq_S\times\prod_{\nu\not\in S_f}\bbz_{\nu})$
  where $\bbq$ is diagonally embedded in $\bbq_S$. For a non-Archimedian
  (resp. Archimedian) place $\nu$, let us normalize the Haar measure
  $\mu_{\nu}$ of $\bbq_{\nu}$ such that $\mu_\nu(\bbz_{\nu})=1$
   (resp. $\mu_{\infty}([0,1])=1).$
On $\p\bbq_{\nu}^{m_{\nu}}$, we take the maximum norm
$\para\cdot\para_S$, i.e. $\para\x\para_S=\max_{\nu\in S}\para
x_{\nu}\para_{\nu}$. By the Chinese reminder theorem,
   it is clear that $\bbz_S$ is a co-compact lattice in $\bbq_S$,
   and by the above normalization and the product measure
   on $\bbq_S$, the covolume of $\bbz_S$ is one. For any $\x\in \bbq_S$, $\l\x\r$ denotes the distance from $\x$ to $\bbz_S$, and we shall choose $\langle \x \rangle\in \bbz_S$ such that $\para\x-\langle\x\rangle\para_S=\l\x\r$. For any $\x\in\p\bbq_{\nu}^{m_{\nu}}$, let $c(\x)=\p\|x_{\nu}\|_{\nu}$. Here and for all we set $\kappa=|S|,$ clearly one has $c(\x)\leq\|\x\|_S^{\kappa}$. By virtue of~\cite[proposition 7.2]{KT} one can see the following lemma
   which shows that any discrete $\bbz_S$-submodule of $\p \bbq_{\nu}^{m_{\nu}}$ is essentially coming from $\bbz_S$.
\begin{lem}~\label{base}
If $\Delta$ is a discrete $\bbz_S$-submodule of $\p \bbq_{\nu}^{m_{\nu}}$,
 then there are $\x^{(1)},\cdots,\x^{(r)}$ in $\p \bbq_{\nu}^{m_{\nu}}$ so
 that $\Delta=\bbz_S\x^{(1)}\oplus\cdots\oplus\bbz_S\x^{(r)}$. Moreover for
  any $\nu\in S,\hspace{1mm}x_{\nu}^{(1)},\cdots,x_{\nu}^{(r)}$ are linearly independent over $\mathbb{Q}_{\nu}.$
\end{lem}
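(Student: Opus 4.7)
The plan is to mimic the proof that a discrete subgroup of $\bbr^n$ is free abelian with $\bbr$-linearly independent generators, now over the PID $\bbz_S$. First I would invoke (or re-derive) [KT, Prop.~7.2]: since $\bbz_S$ admits a bounded fundamental domain $F$ in $\bbq_S$ and bounded subsets of $V = \prod_{\nu \in S} \bbq_\nu^{m_\nu}$ are relatively compact, the intersection of $\Delta$ with any bounded subset of $V$ is finite by discreteness; a standard reduction argument then forces $\Delta$ to be finitely generated as a $\bbz_S$-module. Since $\bbz_S$ is a localization of $\bbz$, hence a PID, and $\Delta$ embeds in the torsion-free $\bbz_S$-module $V$ (multiplication by a nonzero $s\in\bbz_S$ is injective because its image in each $\bbq_\nu$ is nonzero), the structure theorem delivers a free decomposition $\Delta = \bbz_S \x^{(1)} \oplus \cdots \oplus \bbz_S \x^{(r)}$.

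The substantive step is the $\bbq_\nu$-linear independence of $x_\nu^{(1)},\ldots,x_\nu^{(r)}$ at each place. Suppose for contradiction that at some $\nu_0 \in S$ one has $\sum_i c_i x_{\nu_0}^{(i)} = 0$ with $c_i \in \bbq_{\nu_0}$ not all zero, say $c_1 \neq 0$. For each $q \in \bbz_S$ and each $i$, let $\alpha_i(q) \in \bbq_S$ denote the vector equal to $q c_i$ at the $\nu_0$-coordinate and $0$ elsewhere; by cocompactness of $\bbz_S \subset \bbq_S$, choose $p_i(q) \in \bbz_S$ with $\alpha_i(q) - p_i(q) \in F$ (where $p_i(q)$ is viewed diagonally). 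This yields uniform bounds $|p_i(q) - q c_i|_{\nu_0} \le D$ and $|p_i(q)|_\nu \le D$ for $\nu \neq \nu_0$, where $D$ is the diameter of $F$. Using $\sum c_i x_{\nu_0}^{(i)} = 0$, the element $\mathbf{y}_q := \sum_i p_i(q) \x^{(i)} \in \Delta$ has $\nu_0$-component $\sum_i (p_i(q) - q c_i) x_{\nu_0}^{(i)}$, hence $\nu_0$-norm bounded by $D \cdot \max_i \|x_{\nu_0}^{(i)}\|_{\nu_0}$, and similarly bounded norm at every other place. Thus the $\mathbf{y}_q$ lie in a fixed relatively compact subset of $V$. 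Since $c_1 \neq 0$ and the diagonal embedding $\bbz_S \hookrightarrow \bbq_{\nu_0}$ is injective, distinct $q$ produce distinct cosets of $\alpha_1(q)$ modulo $\bbz_S$, hence distinct $p_1(q)$, and therefore distinct $\mathbf{y}_q$ by the $\bbz_S$-independence of the $\x^{(i)}$. Infinitely many distinct elements of $\Delta$ in a relatively compact set contradicts discreteness.

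The main obstacle is this counting step --- ensuring that the cocompactness of $\bbz_S$ in $\bbq_S$ (yielding boundedness) combined with the injectivity of the diagonal embedding into $\bbq_{\nu_0}$ (yielding distinctness) is strong enough to force infinitely many distinct lattice points into a relatively compact subset of $V$. Everything preceding it (finite generation, torsion-freeness, invocation of the PID structure theorem) is essentially formal.
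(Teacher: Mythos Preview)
The paper itself does not write out a proof: it simply says the lemma follows from \cite[Proposition~7.2]{KT}. So your proposal supplies more than the paper does, and your overall strategy --- finite generation from discreteness and cocompactness, freeness from the PID structure theorem, then a contradiction at each place by manufacturing infinitely many lattice points in a bounded region --- is the right one.

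There is, however, one genuine gap in the distinctness step. You write ``distinct $q$ produce distinct cosets of $\alpha_1(q)$ modulo $\bbz_S$, hence distinct $p_1(q)$.'' The first clause is fine (at least when $|S|\ge 2$), but the deduction is false: distinct cosets do \emph{not} force distinct $p_1$-values. With $F$ a strict fundamental domain one has $p_1(q)=\alpha_1(q)-\beta(q)$ where $\beta(q)\in F$ is the unique coset representative; different cosets give different $\beta$-values, but $\alpha_1$ changes too, and the differences can cancel. (Concretely: for $\bbz\subset\bbr$, $F=[0,1)$, $c_1=\tfrac12$, one has $\alpha_1(2)=1$ and $\alpha_1(3)=\tfrac32$ in different cosets, yet $p_1(2)=p_1(3)=1$.)

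The repair is immediate and stays within your setup: instead of ranging over all $q\in\bbz_S$, pick a sequence $q_n\in\bbz_S$ with $|q_n|_{\nu_0}\to\infty$ (e.g.\ $q_n=n\in\bbz$ if $\nu_0=\infty$, or $q_n=p_{\nu_0}^{-n}$ if $\nu_0$ is a finite place of $S$). From $|p_1(q_n)-q_nc_1|_{\nu_0}\le D$ and $c_1\neq 0$ you get $|p_1(q_n)|_{\nu_0}\to\infty$, hence infinitely many distinct $p_1(q_n)$ and therefore infinitely many distinct $\mathbf{y}_{q_n}$. This restores the contradiction with discreteness.
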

\begin{definition}~\label{primgood}
Let $\Gamma$ be a discrete $\bbz_S$-submodule
of $\p\bbq_{\nu}^{m_{\nu}}$; then
a submodule $\Delta$ of $\Gamma$ is called a
{\it primitive submodule} if $\Delta=\Delta_{\bbq_S}\cap\Gamma$.
\end{definition}
\begin{remark}~\label{primgood2} Let $\Gamma$ and $\Delta$ be as in
definition~\ref{primgood}, then $\Delta$ is a primitive submodule of $\Gamma$, if and
only if there exists a complementary $\bbz_S$-submodule $\Delta'\subseteq \Gamma$, i.e. $\Delta\cap\Delta'=0$ and $\Delta+\Delta'=\Gamma$.
\end{remark}

%%%%%%%%%%%%%%%%%%%%%%%%%%%%%%%%%%%%%%%%%%%%%%%%%%%%%%%%%%%%%%%%%%%%%%%%%%%%
%%%%%%%%%%%%%%%%%%%%%%%%%% Proof of theorem > %%%%%%%%%%%%%%%%%%%%%%%%%%%%%%
%%%%%%%%%%%

\section{Proof of theorem~\ref{>}}

As in the introduction we have $\mathcal{R}=\bbz_{S_{\mathcal{R}}}$. Let $|S_{\mathcal{R}}|=\kappa_{\mathcal{R}}$ and $|{S_{\mathcal{R}}}^c|=\widetilde{\kappa_{\mathcal{R}}}$ and so $|S|=\kappa=\kappa_{\mathcal{R}}+\widetilde{\kappa_{\mathcal{R}}}$.  
Fix $\q=(q_1,\cdots,q_n)\in\mathcal{R}^n$ with $T_i/2\le |q_i|_S<T_i$ and define $T=\prod_i T_i$ and $R=T^{\frac{1}{n\kappa}}$. As in the theorem, let $0<\epsilon<\frac{1}{4n\kappa(1+\widetilde{\kappa_{\mathcal{R}}})}$ be fixed through out the paper. Let ${\bf g}(\x)=\fbf(\x)\cdot\q$, for any $\x\in {\bf U}$, and set
 \[\mathcal{A}_{\q}=\{\x\in\mathcal{A}|\h\mbox{the hypothesis of the theorem holds for}\h\q =(q_1,\cdots,q_n) \}.\]
 
 \noindent
 This means that any $\x\in \mathcal{A}_{\q}$ satisfies the following properties:
 \begin{itemize}
 \item[P1)] For some $q_0\in \mathcal{R}$, ${|{\bf g}(\x)+q_0|}^{\kappa}_S<\delta T^{-\kappa_{\mathcal{R}}}$.
 \item[P2)] For any $\nu \in {S_{\mathcal{R}}}^c$, $\|{\bf g}(\x)\|_{\nu}>\|\q\|_S^{-\epsilon}$.
 \item[P3)] For any $\nu \in S_{\mathcal{R}}$, $\|{\bf g}(\x)\|_{\nu}>\|\q\|_S^{1-\epsilon}$.
 \end{itemize}
 
 It is worth mentioning that because of (b-2, iii) the third condition on the coordinate maps,  ${\bf g}$ also satisfies the following conditions at any point $\x$:
 
 \begin{itemize}
 \item[C1)] For any $\nu\in S$, $\|\g g_{\nu}(x_{\nu})\|_{\nu} \le \|\q\|_{\nu}$.
 \item[C2)] For any $\nu\in S, 1\le i,j\le d_{\nu}, x_{\nu},x'_{\nu},x''_{\nu}\in U_{\nu}, \bar{\Phi}_{ij}(g_{\nu})(x_{\nu},x'_{\nu},x''_{\nu})\le \|\q\|_{\nu}$.
 \end{itemize}
 \noindent
We will show that, $|\mathcal{A}_\q|<C \delta\h T^{-g(\mathcal{R})}|\U|,$ which then, summing over all possible $\q$'s, will finish the proof. 
\\

\noindent
Let $\bbf(\x)$ be a neighborhood of $\x$ which is defined as follows.

\begin{itemize}
\item[(i)] If $\widetilde{\kappa_{\mathcal{R}}}>0$, let
$$\bbf(\x)=\fball\times\infball$$
\item[(ii)] If $\widetilde{\kappa_{\mathcal{R}}}=0$, let
$$\bbf(\x)=\ball$$
\end{itemize}

For $\x\in\mathcal{A}_\q,$ let $q_0$ be an element in $\mathcal{R}$ which satisfies (P1). 
\\

\noindent
{\bf First step.} For any  $\ybf\in\bbf(\x)$, $ B({\bf g}({\bf y}),\frac{1}{4R})\cap \mathcal{R} \subseteq \{q_0\}$, i.e. $q_0$ is the only possible $\frac{1}{4R}$ approximation of ${\bf g}({\bf y})$ with an element of $\mathcal{R}$.
\\

\noindent
{\bf Proof of the first step.}
Let $q_0'\in B({\bf g}({\bf y}),\frac{1}{4R})\cap \mathcal{R}$. Assume that $q_0\neq q_0'$. In order to get a contradiction we will use the Taylor expansion of ${\bf g}$ about $\x$ at each place, i.e.
\[q_0+g_{\nu}(y_{\nu})=q_0+g_{\nu}(x_{\nu})+\g g_{\nu}(x_{\nu})\cdot(x_{\nu}-y_{\nu})+\sum_{i.j}\bar{\Phi}_{ij}(g_{\nu})(\bullet)(x_{\nu}^{(i)}-y_{\nu}^{(i)})(x_{\nu}^{(j)}-y_{\nu}^{(j)}),\]
where the arguments of $\bar{\Phi}_{ij}(g_{\nu})$ are some of the  components of $x_{\nu}$ and $y_{\nu}$. We divide the proof into two parts:
\begin{itemize}
\item[(i)] $\widetilde{\kappa_{\mathcal{R}}}=0$. In this case, $|q_0+{\bf g}(\ybf)|_{S}<\frac{1}{4}$ because of the Taylor expansion and the following inequalities,
\begin{itemize}\item[$\bullet$] $|q_0+g_{\nu}(x_{\nu})|_{\nu}\le \frac{1}{4d_{\infty}}$ because of (P1),
\item[$\bullet$] $|\g g_{\nu}(x_{\nu})\cdot(x_{\nu}-y_{\nu})|_{\nu} \le \frac{1}{4d_{\infty}}$ because of the way we defined ${\bf B}(\x)$,
\item[$\bullet$] $|\sum_{i.j}\bar{\Phi}_{ij}(g_{\nu})(\bullet)(x_{\nu}^{(i)}-y_{\nu}^{(i)})(x_{\nu}^{(j)}-y_{\nu}^{(j)})|_{\nu} \le \frac{1}{4d_{\infty}}$ because of (C2), (P2), and the definition of ${\bf B}(\x)$.
\end{itemize}
so $|q_0-q'_0|_S<\frac{1}{2},$ which says $q_0=q'_0.$  
\item[(ii)] $\widetilde{\kappa_{\mathcal{R}}}>0$. Similar to the previous case, we will compare the maximum possible values of each of the three above expressions at each place $\nu$. The first one is always small. It is enough to compare the last two. Because of the way we defined ${\bf B}(\x)$, the second term is less than $\frac{1}{4R^{1/\widetilde{\kappa_{\mathcal{R}}}}}$ (resp. $\frac{R^{1/\kappa_{\mathcal{R}}}}{2}$) for $\nu \in {S_{\mathcal{R}}}^c$ (resp. $\nu \in S_{\mathcal{R}}$). Indeed the third term is also less than these values because of  (P2) (resp. (P3)), $|q_i|_S\le T_i$ (resp. $T_i/2 \le |q_i|_S$), and $\epsilon$ being small. So we have $\p |q'_0-q_0|_{\nu}<\frac{1}{4},$ which contradicts the product formula, since we have $q_0, q'_0\in\mathcal{R}\subseteq\bbz_S$.
\end{itemize}

\noindent
{\bf Second step.}
For any $\nu\in S$ and $\ybf\in\bbf(\x)$, we have $$\|\g g_{\nu}(y_{\nu})-\g g_{\nu}(x_{\nu})\|_{\nu}<\|\g g_{\nu}(x_{\nu})\|_{\nu}/4.$$ 
\\

\noindent
{\bf Proof of the second step.} This time, we will use the Taylor expansion of $\partial_i g_{\nu}$ about $x_{\nu}$. So let $\mathbf{z}=(z_{\nu})$ where $y_{\nu}=x_{\nu}+z_{\nu}$. In this setting, we have $$\begin{array}{ll}\partial_i
g_{\nu}(y_{\nu})&=\partial_i g_{\nu}(x_{\nu})+\sum_{j}
\bar{\Phi}_{j}(\partial_i g_{\nu})(\bullet)z_{\nu}^j\\&=\partial_i g_{\nu}(x_{\nu})+
\sum_{j}(\bar{\Phi}_{ji}(g_{\nu})(\bullet)+\bar{\Phi}_{ji}(g_{\nu})(\bullet))z_{\nu}^j,\end{array}$$
where the arguments of $\bar{\Phi}_{ij}(g_{\nu})$ and $\bar{\Phi}_{j}(\partial_i g_{\nu})$, as before, are some of the  components of $x_{\nu}$ and $y_{\nu}$.
Now similar to the first step, one can argue and get the following inequalities, which complete the proof of the second step.
\begin{itemize}

\item[(i)] If $\nu\in {S_{\mathcal{R}}}^c$ then $$|\partial_i g_{\nu}(y_{\nu})-\partial_i
g_{\nu}(x_{\nu})|_{\nu}<|z_{\nu}|_{\nu}\leq\frac{1}{4R^{\frac{1}{\widetilde{\kappa_{\mathcal{R}}}}}\|\nabla
{g}_{\nu}(x_{\nu})\|_{\nu}}\leq \frac{\|\nabla {g}_{\nu}(x_{\nu})\|_{\nu}}{4},$$

\item[(ii)] If $\nu\in S_{\mathcal{R}}$ then $$|\partial_i g_{\nu}(y_{\nu})-\partial_i
g_{\nu}(x_{\nu})|_{\nu}<2d_\infty|q|_\nu|z_{\nu}|_{\nu}\leq 2d_\infty|q|_\nu\rho\leq \frac{\|\nabla {g}_{\nu}(x_{\nu})\|_{\nu}}{4},$$ where $\rho=\frac{1}{4d_{\infty}\|\nabla
{g}_{\nu}(x_{\nu})\|_{\nu}}$ if $S_{\mathcal{R}}=S$ and $\rho=\frac{R^{\frac{1}{\kappa_{\mathcal{R}}}}}{4d_{\infty}\|\nabla
{g}_{\nu}(x_{\nu})\|_{\nu}}$ otherwise. 
\end{itemize} 
\vspace{1mm}

\noindent
{\bf Third step.} $|\pi_{\nu}(\mathcal{A}_{\q}\cap\bbf(\x))|\le C' (\delta T^{-g(\mathcal{R})})^{\frac{1}{\kappa}} r_{\nu} |\pi_{\nu}({\bf B}(\x))|,$ where $r_{\nu}= \frac{1}{R^{1/\kappa_{\mathcal{R}}}}$ (resp. $R^{1/\widetilde{\kappa_{\mathcal{R}}}}$, 1) for $\nu\in S_{\mathcal{R}}$ (resp. $\nu\in S_{\mathcal{R}}^c$, $\nu\in S_{\mathcal{R}}=S$) and $\pi_{\nu}$ is projection into the $\bbq_{\nu}^{d_{\nu}}$.
\\

\noindent
{\bf Proof of the third step.} Without loss of generality, we may and will assume that $\|\g g_{\nu}(x_{\nu})\|_{\nu}=|\d_1g_{\nu}(x_{\nu})|_{\nu}.$  In fact, we will show that the considered set is thin in the $e_1$ direction, and  it gives us the factor saving. So let $\ybf, \ybf'\in\mathcal{A}_{\q}\cap\bbf(\x),$ and assume that $\pi_{\nu}(\ybf')=\pi_{\nu}(\ybf)+\alpha e_1$. Note that by the first step and (P1), for some $q_0\in\mathcal{R}$, we have $|q_0+{\bf g}(\ybf)|^{\kappa}_S\le \delta T^{-g(\mathcal{R})}$ and  $|q_0+{\bf g}(\ybf')|^{\kappa}_S\le \delta T^{-g(\mathcal{R})}$, and so \begin{equation}\label{Snorm}|{\bf g}(\ybf')-{\bf g}(\ybf)|^{\kappa}_S \le 2^{\kappa} \delta T^{-g(\mathcal{R})}.\end{equation} As always set $y_{\nu}=\pi_{\nu}(\ybf)$ and $y'_{\nu}=\pi_{\nu}(\ybf')$. 
 
\begin{itemize}

\item[(i)] $\nu\in S\backslash \{\infty\}$. Again we use the Taylor expansion to get a ``mean value theorem" at the norm level. $$g_{\nu}(y_{\nu}+\alpha e_1)-g_{\nu}(y_{\nu})=\d_1g_{\nu}(y_{\nu})\alpha+\Phi_{11}g(\bullet)\alpha^2,$$ as before a norm comparison, gives us \begin{equation}\label{finite}|g_{\nu}(y_{\nu}+\alpha e_1)-g_{\nu}(y_{\nu})|_{\nu}=|\d_1g_{\nu}(y_{\nu})|_{\nu}|\alpha|_{\nu}\end{equation}
\item[(ii)] $\nu=\infty$. Here we have the mean value theorem and so for some $z_{\infty}$, \begin{equation}\label{infinite}g_{\infty}(y_{\infty}+\alpha e_1)-g_{\infty}(y_{\infty})=\d_1g_{\infty}(z_{\infty})\alpha\end{equation}

\end{itemize}

\noindent

Now by fixing the last $d_{\nu}-1$ entries, we slice our set, and equations~\ref{finite} and~\ref{infinite} coupled with inequality~\ref{Snorm} and the second step tell us that the measure of each slice is at most $C'' \frac{(\delta T^{-g(\mathcal{R})})^{\frac{1}{\kappa}}}{\|\g g_{\nu}(x_{\nu})\|_{\nu}}=C''(\delta T^{-g(\mathcal{R})})^{\frac{1}{\kappa}} r_{\nu}  \times \mbox{radius of } \pi_{\nu}({\bf B}(\x))$. Now direct use of Fubini's theorem completes the proof of this step.
\\

\noindent
{\bf Final step.} For any $\nu\in S$, $\{\pi_{\nu}({\bf B}(\x))\}_{\x\in\mathcal{A_{\q}}}$ is a covering of $\pi_{\nu}(\mathcal{A}_{\q})$. Using Besicovitch covering lemma (see in~\cite{KT} for details on this) and the third step, we can conclude that \[|\pi_{\nu}(\mathcal{A}_{\q})|\le C''' (\delta T^{-g(\mathcal{R})})^{\frac{1}{\kappa}} r_{\nu} |U_{\nu}|, \] for some universal constant $C'''$.  the following inequalities complete the proof: \[|\mathcal{A}_{\q}| \le \prod_{\nu\in S} |\pi_{\nu}(\mathcal{A}_{\q})| \le C \prod_{\nu\in S}  ((\delta T^{-g(\mathcal{R})})^{\frac{1}{\kappa}} r_{\nu}) |{\bf U}|=C \delta\h T^{-g(\mathcal{R})} |\U|. \]

%%%%%%%%%%%%%%%%%%%%%%%%%%%%%%%%%%%%%%%%%%%%%%%%%%%%%%%%%%%%%%%%%%%%%%%%%%%%
%%%%%%%%%%%%%%%%%%%%%%%%%% Good Functions %%%%%%%%%%%%%%%%%%%%%%%%%%%%%%%%%%
%%%%%%%%%%%%%%%%%%%%%%%%%%%%%%%%%%%%%%%%%%%%%%%%%%%%%%%%%%%%%%%%%%%%%%%%%%%%
\section{Good functions}
In this section, first we recall the notion of a {\it good
function} and a few known theorems, then we establish the
``{\it goodness}" of a family of $\nu$-adic analytic functions, which
will be needed in the proof of theorem~\ref{<}.

\begin{definition}\cite{KM} Let
$C$ and $\alpha$ be positive real numbers, a function $\fbf$
defined on an open set $\mathbf{V}$ of $X=\prod_{\nu\in S}
\bbq_{\nu}^{m_\nu}$ is called $(C, \alpha)$-good, if for any open
ball $\mathbf{B}\subset \mathbf{V}$ and any $\vare >0$ one has
$$|\{\mathbf{x}\in \mathbf{B}|\hspace{1mm}
\|\mathbf{f}(\mathbf{x})\|< \vare\cdot\sup_{\mathbf{x}\in
\mathbf{B}}\|\mathbf{f}(\mathbf{x})\|\}|\leq C
\hspace{1mm}\vare^{\alpha}|\mathbf{B}|.$$ 
\end{definition} 
\noindent
The following is tautological consequence of the above definition.
\begin{lem}
Let $X=\prod_{\nu\in S} \bbq_{\nu}^{m_\nu}, \mathbf{V}$ and $\mathbf{f}$ be as above then
\begin{itemize}
\item[(i)] $\mathbf{f}$ is $(C,\alpha)$-good on $\mathbf{V}$ if and only if $\|\mathbf{f}\|$ is $(C,\alpha)$-good.

\item[(ii)] If $\mathbf{f}$ is $(C,\alpha)$-good on $\mathbf{V}$,
then so is $\lambda\mathbf{f}$ for any $\lambda\in \bbq_S.$

\item[(iii)] Let $I$ be a countable index set, if $\mathbf{f}_i$ is $(C,\alpha)$-good on
$\mathbf{V}$ for any $i\in I$, then so is $\sup_{i\in
I}\|\mathbf{f}\|.$

\item[(iv)] If $\mathbf{f}$ is $(C,\alpha)$-good on $\mathbf{V}$
and $c_1\leq
\|\mathbf{f}(\mathbf{x})\|_S/\|\mathbf{g}(\mathbf{x})\|_S\leq
c_2$, for any $x\in \mathbf{V},$ then $\mathbf{g}$ is
$(C({c_2}/{c_1})^{\alpha},\alpha)$-good on $\mathbf{V}.$
\end{itemize}
\end{lem}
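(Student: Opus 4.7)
\emph{Proof plan.} Each of the four assertions is a direct unpacking of the definition of $(C,\alpha)$-good: one only has to exhibit, for a general ball $\mathbf{B}\subseteq\mathbf{V}$ and $\vare>0$, a set inclusion reducing the claim to the hypothesis. I would handle the parts in the order stated, and expect none to require more than careful bookkeeping with the norm $\|\cdot\|$.

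For (i), the set $\{\mathbf{x}\in\mathbf{B}:\|\mathbf{f}(\mathbf{x})\|<\vare\sup_{\mathbf{B}}\|\mathbf{f}\|\}$ already depends only on the scalar function $\|\mathbf{f}(\cdot)\|$, so $\mathbf{f}$ and $\|\mathbf{f}\|$ satisfy the defining inequality with exactly the same pairs $(C,\alpha)$. For (ii), the point is that passing from $\mathbf{f}$ to $\lambda\mathbf{f}$ rescales both $\|\lambda\mathbf{f}(\mathbf{x})\|$ and $\sup_{\mathbf{B}}\|\lambda\mathbf{f}\|$ in the same manner, so the sublevel set in the definition is unchanged (the case $\lambda=0$ being trivial), and the $(C,\alpha)$-good bound is inherited verbatim.

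For (iii), put $h:=\sup_{i\in I}\|\mathbf{f}_i\|$ and choose $\{i_k\}\subseteq I$ with $\sup_{\mathbf{B}}\|\mathbf{f}_{i_k}\|\to\sup_{\mathbf{B}}h$; with $\vare_k:=\vare\sup_{\mathbf{B}}h/\sup_{\mathbf{B}}\|\mathbf{f}_{i_k}\|$ one has
$$\{\mathbf{x}\in\mathbf{B}:h(\mathbf{x})<\vare\sup_{\mathbf{B}}h\}\subseteq\{\mathbf{x}\in\mathbf{B}:\|\mathbf{f}_{i_k}(\mathbf{x})\|<\vare_k\sup_{\mathbf{B}}\|\mathbf{f}_{i_k}\|\},$$
since $\|\mathbf{f}_{i_k}(\mathbf{x})\|\le h(\mathbf{x})<\vare\sup_{\mathbf{B}}h=\vare_k\sup_{\mathbf{B}}\|\mathbf{f}_{i_k}\|$. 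The hypothesis bounds the measure of the right-hand side by $C\vare_k^\alpha|\mathbf{B}|$, and sending $k\to\infty$, so that $\vare_k\to\vare$, yields the required $C\vare^\alpha|\mathbf{B}|$. For (iv), the two-sided comparison $c_1\le\|\mathbf{f}\|/\|\mathbf{g}\|\le c_2$ gives
$$\{\mathbf{x}\in\mathbf{B}:\|\mathbf{g}(\mathbf{x})\|<\vare\sup_{\mathbf{B}}\|\mathbf{g}\|\}\subseteq\{\mathbf{x}\in\mathbf{B}:\|\mathbf{f}(\mathbf{x})\|<(c_2/c_1)\vare\sup_{\mathbf{B}}\|\mathbf{f}\|\},$$
and applying $(C,\alpha)$-goodness of $\mathbf{f}$ at the rescaled threshold $(c_2/c_1)\vare$ produces the desired constant $C(c_2/c_1)^\alpha$.

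As the authors themselves note, the entire statement is tautological; the only step that is not a one-line set inclusion is the supremum-approximation in (iii), and that is entirely standard, so no genuine obstacle is anticipated.
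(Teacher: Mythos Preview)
Your proposal is correct and is precisely the straightforward expansion the paper has in mind: the authors give no proof at all, merely remarking that the lemma is a ``tautological consequence of the above definition,'' and your set-inclusion arguments for each item are exactly the one-line verifications that justify that remark.
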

\noindent
Let us recall the following lemma from~\cite[lemma 2.4]{KT}.

\begin{lem}~\label{polygood}
Let $\hspace{1mm}\nu$ be any place of $\bbq$ and $p\in\bbq_{\nu}[x_1,\cdots,x_d]$
 be a polynomial of degree not greater than $l$. Then there exists
 $\hspace{1mm}C=C_{d, l}\hspace{1mm}$ independent of $p$, such that $p$ is $(C,1/{dl})$-good on $\bbq_{\nu}$.
\end{lem}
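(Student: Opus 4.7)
\textbf{Proof proposal for Lemma \ref{polygood}.}

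My plan is to induct on the number of variables $d$. For the base case $d=1$, a polynomial $p(x)\in\bbq_{\nu}[x]$ of degree $l'\le l$ factors over the algebraic closure as $p(x)=c\prod_{i=1}^{l'}(x-\alpha_i)$, and the $\nu$-adic norm extends uniquely to $\bar{\bbq}_{\nu}$. Fix a ball $B\subset\bbq_{\nu}$, set $M:=\sup_{x\in B}|p(x)|_{\nu}$, and pick $y\in B$ with $|p(y)|_{\nu}\ge M/2$. Any $x\in B$ with $|p(x)|_{\nu}<\epsilon M$ then forces $\prod_i|x-\alpha_i|_{\nu}/|y-\alpha_i|_{\nu}<2\epsilon$, so at least one factor satisfies $|x-\alpha_i|_{\nu}<(2\epsilon)^{1/l'}|y-\alpha_i|_{\nu}$. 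The bad set is therefore covered by $l'$ balls whose intersections with $B$ have total measure at most $C\epsilon^{1/l}|B|$; this follows from the ultrametric inequality when $\nu$ is finite, and from a direct interval-length comparison when $\nu=\infty$.

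For the inductive step, assume the claim for $d-1$ variables. Write $p(x_1,x')=\sum_{j=0}^{l}b_j(x')x_1^j$ with $x'=(x_2,\ldots,x_d)$, let $B=B_1\times B'$ be a ball, and set $M:=\sup_B|p|_{\nu}$. Choose $x_1^0\in B_1$ so that $q(x'):=p(x_1^0,x')$ satisfies $\sup_{B'}|q|_{\nu}\ge M/2$; such a choice exists because $M=\sup_{x_1}\sup_{x'}|p(x_1,x')|_{\nu}$. For each fixed $x'$, the base case applied to the one-variable restriction yields
$$\bigl|\{x_1\in B_1 : |p(x_1,x')|_{\nu}<\epsilon M\}\bigr|\le C_1\bigl(\epsilon M/M_1(x')\bigr)^{1/l}|B_1|$$
whenever $M_1(x'):=\sup_{x_1}|p(x_1,x')|_{\nu}>\epsilon M$. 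Using $M_1(x')\ge|q(x')|_{\nu}$, I would integrate this over $x'$, split $B'$ according to whether $|q(x')|_{\nu}$ is above or below a threshold, bound the sub-threshold part by the inductive hypothesis applied to $q$ (a polynomial of degree $\le l$ in $d-1$ variables, hence $(C_{d-1},1/((d-1)l))$-good), and handle the super-threshold part by a layer-cake decomposition in $t=|q(x')|_{\nu}/M$.

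The main obstacle is the layer-cake computation that must combine the slice exponent $1/l$ with the inductive exponent $1/((d-1)l)$ to produce the claimed $1/(dl)$: a naive estimate either produces a logarithmic loss (visible already at $d=2$) or only the weaker exponent $1/((d-1)l)$. The right manoeuvre is to place the threshold at $\epsilon^{1/(dl)}$ so that the two contributions balance, and to propagate the constants $C_d$ carefully through the recursion. A transparent fallback, should the recursion turn delicate, is to slice coordinate-by-coordinate in all $d$ directions and combine $d$ one-dimensional good bounds multiplicatively, which loses a factor of $d$ in the exponent exactly as the statement predicts, at the cost of worse constants.
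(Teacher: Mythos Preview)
The paper does not supply a proof of this lemma; it is simply quoted from \cite[Lemma~2.4]{KT}, so there is no in-paper argument to compare against. Your outline is a legitimate self-contained route and follows the standard Fubini-plus-induction scheme underlying the result in \cite{KM,KT}. Your one-variable step via root factorisation over $\overline{\bbq}_{\nu}$ is a slightly different device from the Lagrange-interpolation estimate the paper itself uses elsewhere (the ``First Claim'' inside the proof of Lemma~\ref{gamma}); both are valid, though in the Archimedean case you should spell out that roots with $|y-\alpha_i|$ large relative to the radius of $B$ contribute nothing to the bad set, while the remaining roots give intervals of length $O(\epsilon^{1/l})|B|$.

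Two corrections to your inductive step. First, the layer-cake decomposition is unnecessary and is exactly what manufactures the spurious logarithm at $d=2$; a single threshold split already closes the argument. On $\{x':|q(x')|\ge tM\}$ bound the $x_1$-slice by $C_1(\epsilon/t)^{1/l}|B_1|$ and integrate trivially over $B'$; on $\{x':|q(x')|<tM\}$ bound the slice by $|B_1|$ and apply the inductive hypothesis to $q$ to get $C_{d-1}(2t)^{1/((d-1)l)}|B'|$. Second, the balancing threshold is $t=\epsilon^{(d-1)/d}$, not $\epsilon^{1/(dl)}$: with this choice both contributions equal a constant times $\epsilon^{1/(dl)}|B|$, and the recursion $C_d=C_1+2^{1/((d-1)l)}C_{d-1}$ terminates with $C_d$ depending only on $d$ and $l$. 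Your ``fallback'' of slicing in all $d$ coordinate directions is essentially the unrolled form of this same induction.
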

\noindent
Next we state a variation of \cite [theorem 3.2] {KT}
without proof.
\begin{thm}~\label{good}
Let $V_1, \cdots, V_d$ be nonempty open sets in $\bbq_{\nu}$, Let
$k\in \bbn$, $A_1, \cdots, A_d, A'_1, \cdots, A'_d$ positive real
numbers and $f\in C^k(V_1\times \cdots \times V_d)$ be such that
$$A_i\leq |\Phi_i^{k}f|_{\nu}\leq A'_i \hspace{2mm}\mbox{\rm{on}}
\bigtriangledown^{k+1}V_i\times \prod_{j\neq i} V_j,
\hspace{1mm}i=1\cdots, d .$$ Then $f$ is $(C, \alpha)$-good on
$V_1\times\cdots\times V_d$, where $C$ and $\alpha$ depend only
on $k, d, A_i$, and $A'_i$ .
\end{thm}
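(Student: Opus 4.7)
The strategy is to reduce to the one-variable case and approximate $f$ by a polynomial of degree at most $k$, whose goodness is supplied by Lemma~\ref{polygood}.

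\textbf{One variable ($d=1$).} Given a ball $B\subset V_1$ of radius $r$, pick nodes $y_0,\ldots,y_{k-1}\in B$ and use the $p$-adic Newton interpolation
$$f(x)=P(x)+\bar\Phi^k f(x,y_0,\ldots,y_{k-1})\prod_{l=0}^{k-1}(x-y_l),$$
where $P$ is a polynomial of degree at most $k-1$. Freezing the first argument of $\bar\Phi^k f$ at an extra reference point $y_k\in B$ produces a polynomial $\tilde P$ of degree at most $k$ satisfying $|f-\tilde P|_\nu\le c_1(A_1,A'_1,k)\,r^k$ on $B$, while the lower bound on $|\bar\Phi^k f|_\nu$ yields $\sup_B|\tilde P|\ge c_2(A_1,k)\,r^k$. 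Hence $\sup_B|f|\asymp r^k$, and for $\epsilon$ below a threshold depending only on $k,A_1,A'_1$ we obtain the inclusion $\{x\in B:|f(x)|<\epsilon\sup_B|f|\}\subset\{x\in B:|\tilde P(x)|<c_3\,\epsilon\,\sup_B|\tilde P|\}$. Lemma~\ref{polygood} applied to $\tilde P$ then yields the desired measure bound, while above the threshold the bound is trivial. All constants depend only on $k,A_1,A'_1$.

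\textbf{Several variables ($d>1$).} For any frozen $(x_j)_{j\ne i}$ the slice $x_i\mapsto f(x_1,\ldots,x_d)$ meets the one-variable hypothesis with the same constants $A_i,A'_i$, hence is uniformly $(C_i,\alpha_i)$-good in $x_i$ with constants independent of the frozen coordinates. On a product ball $B=B_1\times\cdots\times B_d$ one chooses a coordinate direction $i_0$ along which the supremum of $|f|$ on a generic axis-parallel line is comparable to $\sup_B|f|$; such a direction exists by a pigeonhole among the $d$ coordinates. The one-variable estimate applied on each such line, together with Fubini integration over the remaining coordinates, upgrades slicewise goodness to a global measure bound of the required form, with constants depending only on $k,d$ and the $A_i,A'_i$.

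\textbf{Main difficulty.} The heart of the argument is the one-variable comparison with a polynomial. Because $f$ is only $C^k$ (not $C^{k+1}$), no $(k+1)$-st order remainder is available, so both the upper bound $A'_1$ (needed to control $|f-\tilde P|$) and the lower bound $A_1$ (needed to force $\sup_B|\tilde P|$ to dominate) must be used simultaneously. The interpolation nodes must be chosen respecting $p$-adic ultrametric scales so that $\prod_l(x-y_l)_\nu$ is bounded below on a definite portion of $B$ and all constants depend on $f$ only through $k,A_1,A'_1$. The multivariable Fubini assembly is then routine bookkeeping, the only care being the pigeonhole choice of a good slicing direction and the verification that the constants remain uniform in the frozen coordinates.
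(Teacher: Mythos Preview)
The paper does not prove this theorem---it is quoted from \cite[Theorem~3.2]{KT} explicitly ``without proof''---so there is no argument in the paper to compare against. Judged on its own, your sketch has two genuine gaps.

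In the one-variable step, the assertion $\sup_B|f|\asymp r^k$ is false: take $f(x)=x^k+M$ with $M$ large, so that $A_1=A'_1=1$ but $\sup_{[-1,1]}|f|\approx M$. What the interpolation actually delivers is $|f-\tilde P|\le c_1 r^k$ with $c_1\asymp A'_1$, together with $\sup_B|\tilde P|\ge c_2 r^k$ with $c_2\asymp A_1$. Since $A'_1\ge A_1$, the error $c_1 r^k$ need not be small relative to $\sup_B|\tilde P|$, and a short computation shows the inclusion $\{|f|<\varepsilon\sup_B|f|\}\subset\{|\tilde P|<c_3\,\varepsilon\sup_B|\tilde P|\}$ holds only for $\varepsilon$ \emph{above} a threshold of order $A'_1/A_1$, not below as you write; for small $\varepsilon$ the additive error $c_1 r^k$ dominates $\varepsilon\sup_B|f|$. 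Passing from ``good for all $\varepsilon\ge\delta$'' to ``good for all $\varepsilon$'' requires an iteration over scales---precisely the mechanism of Lemma~\ref{del-good} (the $p$-adic analogue of \cite[Lemma~3.7]{BKM})---and that step is absent. In the multivariable step, the pigeonhole-plus-Fubini outline skips the main difficulty: the one-variable bound on each fibre is relative to the \emph{fibre} supremum $\sup_{x_i\in B_i}|f(x_i,x')|$, so to assemble a global bound you must show that for most $x'$ this fibre supremum is comparable to $\sup_B|f|$. Establishing that comparability uniformly in $x'$ is exactly where the upper bounds $A'_j$ (for $j\ne i$) enter, and it is the substance of the induction on $d$ in \cite{KT}; a bare pigeonhole choice of direction does not supply it.
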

\noindent
Another useful fact which can be easily adapted to the $\nu$-adic
calculus is proposition 3.4 of~\cite{BKM}.
\begin{thm}~\label{compact}
Let $U$ be an open neighborhood of $x_0\in\bbq_{\nu}^m$ and let
$\mathcal{F}\subset C^l(U)$ be a family of functions $f : U
\rightarrow \bbq_{\nu}$ such that
\begin{itemize}
\item[1.] $\{\nabla f| f\in \mathcal{F}\}$ is compact in $C^{l-1}(U)$

\item[2.] $ \inf_{f\in \mathcal{F}} \sup_{|\beta| \leq l}|\partial_{\beta}f(x_0)|> 0.$
\end{itemize}
Then there exist a neighborhood $V\subseteq U$ of $x_0$ and
positive numbers $C=C(\mathcal{F})$ and
$\alpha=\alpha(\mathcal{F})$ such that for any $f\in \mathcal{F}$
\begin{itemize}
\item[(i)] $f$ is $(C, \alpha)$-good on $V$.

\item[(ii)] $\nabla f$ is $(C, \alpha)$-good on $V$.
\end{itemize}
\end{thm}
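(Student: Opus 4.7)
The strategy will be a reduction to the polynomial-goodness lemma (Lemma~\ref{polygood}) via a Taylor-expansion argument at $x_0$, with the compactness hypothesis providing uniformity across $\mathcal F$. This mirrors \cite[Prop.~3.4]{BKM} in the archimedean case, but using the $\nu$-adic difference quotients $\bar\Phi_\beta f$ in place of classical derivatives.

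\textbf{Pigeonhole on the dominant derivative.} First, I would observe that there are only finitely many multi-indices $\beta$ with $|\beta|\le l$, so hypothesis (2) yields a $\delta_0>0$ and a decomposition $\mathcal F=\bigcup_\beta \mathcal F_\beta$ with $|\partial_\beta f(x_0)|_\nu\ge\delta_0$ for every $f\in\mathcal F_\beta$. It will then be enough to establish the conclusion on each $\mathcal F_\beta$ and take the worst constants at the end. The compactness of $\{\nabla f\}$ in $C^{l-1}(U)$ supplies equicontinuity of $\partial_\gamma f$ for $1\le|\gamma|\le l$ together with a uniform sup-bound $M$; shrinking to a small neighborhood $V\ni x_0$, we may assume $|\partial_\beta f(x)|_\nu\ge\delta_0/2$ throughout $V$ for every $f\in\mathcal F_\beta$.

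\textbf{Taylor and polynomial goodness.} Next I would write $f(x)=P_f(x)+r_f(x)$, where $P_f$ is the $\nu$-adic Taylor polynomial of $f$ of order $l$ at $x_0$; equicontinuity of the order-$l$ derivatives ensures $|r_f(x)|_\nu\le\eta\,\|x-x_0\|_\nu^l$ on $V$, with $\eta$ as small as desired (after shrinking $V$ if necessary). The polynomial $P_f$ has degree $\le l$ in $d$ variables, and its $x^\beta$-coefficient has norm at least $\delta_0/(2\beta!)$. An iteration of Lemma~\ref{polygood} variable-by-variable (equivalently, a multivariate form of Theorem~\ref{good}) shows that $P_f$ is $(C_1,\alpha_1)$-good on every ball, with $C_1,\alpha_1$ depending only on $d,l,\delta_0,M$. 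Goodness of $f$ then follows from the standard perturbation principle: if $|f-P_f|_\nu\le\tfrac12|P_f|_\nu$ on $B$, then $(C_1,\alpha_1)$-goodness of $P_f$ transfers to $(2^{\alpha_1}C_1,\alpha_1)$-goodness of $f$. To verify this comparison I would combine the uniform bound $\sup_B|r_f|_\nu\le\eta\,(\mathrm{diam}\,B+\|x_0-\mathrm{center}(B)\|_\nu)^l$ with a Markov-type lower bound on $\sup_B|P_f|_\nu$ that exploits the controlled $x^\beta$-coefficient. The degenerate case $\beta=0$ is handled more directly: $|f|_\nu\ge\delta_0/2$ on $V$, so $f$ is trivially $(C,\alpha)$-good.

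\textbf{The gradient and the main obstacle.} For (ii) I would apply the same machinery to the family $\{\partial_i f:f\in\mathcal F_\beta\}$, which is compact in $C^{l-1}(U)$; whenever $\beta_i\ge 1$ one has $|\partial^{\beta-e_i}(\partial_i f)(x_0)|_\nu=|\partial_\beta f(x_0)|_\nu\ge\delta_0$, so the previous steps go through at level $l-1$, yielding $(C,\alpha)$-goodness of $\partial_i f$. Components $\partial_j f$ with $\beta_j=0$ are either treated by their own dominant derivative or, after further shrinking $V$, uniformly dominated by $|\partial_i f|_\nu$; in either case the closure property that a sup of $(C,\alpha)$-good functions is $(C,\alpha)$-good yields that $\|\nabla f\|_\nu=\max_i|\partial_i f|_\nu$ is good. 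The main obstacle will be the Markov-type comparison in the previous step: since $P_f$ is a polynomial of degree $\le l$ with only one coefficient controlled from below, $\sup_B|P_f|_\nu$ can degenerate on balls either near $x_0$ (when lower-order coefficients vanish) or far from $x_0$ (through high-degree cancellations), and extracting a uniform lower bound on $\sup_B|P_f|_\nu$ across all $f\in\mathcal F_\beta$ is the place where the compactness hypothesis must be used most delicately.
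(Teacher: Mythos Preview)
The paper's proof is one line: the argument of \cite[Prop.~3.4]{BKM} carries over verbatim once Theorem~\ref{good} replaces the archimedean goodness criterion there. The point is that Theorem~\ref{good} is applied to $f$ \emph{itself}, not to a polynomial approximant. Compactness of $\{\nabla f\}$ in $C^{l-1}(U)$ together with hypothesis~(2) yields, after passing to a small product neighborhood of $x_0$, uniform two-sided bounds $A_i\le|\Phi_i^{k_i}f|_\nu\le A_i'$ on the relevant higher difference quotients of $f$ (and likewise of each $\partial_j f$), and Theorem~\ref{good} converts these directly into $(C,\alpha)$-goodness with constants depending only on those bounds. No Taylor expansion, no perturbation step.

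Your route through the Taylor polynomial $P_f$ is genuinely different and, as written, has a real gap at exactly the spot you flag. Transferring goodness from $P_f$ to $f$ needs something like $\sup_B|r_f|_\nu\le\tfrac12\sup_B|P_f|_\nu$ for \emph{every} ball $B\subseteq V$, but this can fail: $P_f$ is a polynomial with only one coefficient controlled from below, so on a small ball $B$ centered near a zero of $P_f$ (and away from $x_0$) one may have $\sup_B|P_f|_\nu$ arbitrarily small while $\sup_B|r_f|_\nu$ stays of order $\eta\,(\mathrm{diam}\,V)^l$. No Markov-type inequality on the $x^\beta$-coefficient bounds $\sup_B|P_f|_\nu$ from below uniformly over all such sub-balls. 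A Taylor-plus-perturbation argument \emph{can} be made to work---the paper itself does this later in Lemma~\ref{unbounded}, using Lemma~\ref{del-good} to restrict attention to $\varepsilon\ge\delta$ and then bootstrap the exponent---but you neither invoke that lemma nor supply a substitute. The clean fix is to drop the Taylor detour and feed $f$ directly into Theorem~\ref{good}, as the paper does.
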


\begin{proof}
The argument in \cite [proposition 3.4]{BKM} goes through using
theorem \ref{good}.
\end{proof}

\begin{cor}
\label{linear}
Let $f_1, f_2, \cdots, f_n$ be analytic functions from a neighborhood $U$ of $x_0$ in $\bbq_{\nu}^m$ to $\bbq_{\nu}$, such that $1,\hspace{1mm}f_1, f_2, \cdots, f_n$ are linearly independent on any neighborhood of $x_0$, then
\begin{itemize}
\item[(i)] There exist a neighborhood $V$ of $x_0$, $C \hspace{1mm}\& \hspace{1mm}\alpha>0$ such that any linear combination of $\hspace{1mm}1, f_1, f_2, \cdots, f_n$ is $(C, \alpha)$-good on $V$.

\item[(ii)] There exist a neighborhood $V'$ of $x_0$, $C' \hspace{1mm}\& \hspace{1mm}\alpha'>0$ such that for any $\hspace{1mm}d_1, d_2, \cdots, d_n \in \bbq_{\nu}$, $\|\sum_{k=1}^{n} d_i \nabla f_i\|$ is $(C', \alpha')$-good.
\end{itemize}
\end{cor}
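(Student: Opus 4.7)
The strategy is to apply Theorem~\ref{compact} to a suitably normalized family of linear combinations and then remove the normalization by the scaling invariance of $(C,\alpha)$-goodness (item (ii) of the lemma following the definition). For part (i), I would take
$$\mathcal{F}=\left\{d_0+\sum_{i=1}^{n} d_if_i \;\Big|\; (d_0,\ldots,d_n)\in \bbq_{\nu}^{n+1},\ \max_{i}|d_i|_{\nu}=1\right\},$$
so that any linear combination of $1,f_1,\ldots,f_n$ is a scalar multiple of a unique element of $\mathcal{F}$.

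The two hypotheses of Theorem~\ref{compact} must be checked for $\mathcal{F}$. Hypothesis (1), that $\{\nabla f:f\in\mathcal{F}\}$ is compact in $C^{l-1}(U)$, is immediate: $\mathcal{F}$ is the continuous image of the compact unit sphere of $\bbq_{\nu}^{n+1}$ under a map valued in $C^l(U)$. For hypothesis (2), for each $l\ge 1$ consider the linear map
$$\Psi_l:\bbq_{\nu}^{n+1}\longrightarrow \bbq_{\nu}^{N(l)},\qquad d\mapsto \bigl(\partial_\beta (d_0+\textstyle\sum d_if_i)(x_0)\bigr)_{|\beta|\le l}.$$
The kernels $\ker\Psi_l$ form a decreasing chain of subspaces of a finite-dimensional space and hence stabilize at some subspace $K$. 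Any $d\in K$ yields a function whose full Taylor series at $x_0$ vanishes; by analyticity the function itself vanishes in a neighborhood of $x_0$, and the linear independence hypothesis then forces $d=0$. Thus $K=\{0\}$ and $\Psi_l$ is injective for some $l$. Compactness of the unit sphere and continuity of $\Psi_l$ then produce the uniform bound $\inf_{f\in\mathcal{F}}\sup_{|\beta|\le l}|\partial_\beta f(x_0)|_{\nu}>0$. Theorem~\ref{compact}(i) now furnishes $V,C,\alpha$ such that every $f\in\mathcal{F}$ is $(C,\alpha)$-good on $V$, and scaling covers the general case.

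Part (ii) is entirely analogous, with $\mathcal{F}$ replaced by
$$\mathcal{G}=\left\{\sum_{i=1}^n d_if_i\;\Big|\;\max_i |d_i|_{\nu}=1\right\},$$
invoking Theorem~\ref{compact}(ii) to conclude that $\|\nabla g\|=\|\sum d_i\nabla f_i\|$ is $(C',\alpha')$-good on some $V'$; linear independence of $1,f_1,\ldots,f_n$ on every neighborhood forces linear independence of $f_1,\ldots,f_n$ there, which is what the kernel-stabilization argument requires. The only non-formal point in the whole proof is precisely that stabilization argument: a merely $C^\infty$ function with vanishing Taylor series at $x_0$ need not vanish near $x_0$, so it is analyticity (and not just the $C^l$ regularity of Theorem~\ref{compact}) that allows us to convert a global linear-independence hypothesis into a finite-order non-vanishing statement at the single point $x_0$. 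This is the step I expect to require the most care.
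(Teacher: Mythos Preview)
Your proposal is correct and follows essentially the same route as the paper: normalize the coefficient vector, apply Theorem~\ref{compact}, and use scaling invariance to handle the general case. The only difference is the choice of normalization: the paper takes $\mathcal{F}=\{d+D\cdot(f_1,\ldots,f_n):d\in\bbq_\nu,\ D\in\bbq_\nu^n,\ \|D\|=1\}$, fixing only the non-constant part and leaving the constant $d$ free, which it can do because hypothesis~(1) of Theorem~\ref{compact} asks only for compactness of the \emph{gradient} family (and $\nabla$ kills $d$). Your normalization of the full $(n{+}1)$-tuple works just as well, and your kernel-stabilization argument together with the remark on analyticity actually spells out what the paper leaves implicit in the phrase ``by our assumptions on $f_1,\ldots,f_n$.''
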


\begin{proof}
Let $\mathcal{F}=\{d+D\cdot(f_1, \cdots, f_n)|\hspace{1mm} d\in \bbq_{\nu},\hspace{1mm} D\in \bbq_{\nu}^n,\hspace{1mm} \|D\|=1\}$. By our assumptions on $f_1, \cdots, f_n$, the family  $\mathcal{F}$ satisfies the conditions of theorem \ref{compact} which gives the corollary.
\end{proof}
\noindent
The following theorem is the main result of this section. This
technical theorem is crucial in the proof of theorem~\ref{<}. Let us first recall the notion of {\it skew gradient} from \cite[section 4] {BKM}. For two $C^1$ functions $g_i:\mathbb{Q}_{\nu}^d\rightarrow\mathbb{Q}_{\nu},$ $i=1,2$ define $\widetilde\g(g_1,g_2):=g_1\g g_2-g_2\g g_1.$ This, as one expects from the definition, measures how far two functions are from being linearly dependent ref. loc. cit. for a discussion on this.

\begin{thm}~\label{tartibat}
Let $U$ be a neighborhood of $x_0\in \bbq_{\nu}^m$, $f_1, f_2,
\cdots, f_n$ be analytic functions from $U$ to $\bbq_{\nu}$, such that $1,\hspace{.5mm}f_1, f_2,\cdots, f_n$ are linearly independent on any open subset of $U.$ Let $F=(f_1, \cdots,
f_n)$ and $$\mathcal{F}= \{(D\cdot F,\hspace{1mm} D'\cdot F+a)|\hspace{1mm}
\|D\|=\|D'\|=\|D\wedge D'\|=1,\hspace{1mm}
 D, D'\in \bbq_{\nu}^n,\hspace{1mm} a\in \bbq_{\nu}\}.$$
 Then there exists a neighborhood $V\subseteq U$ of $x_0$ such that
\begin{itemize}
\item[(i)] For any neighborhood $B\subseteq V$ of $x_0$,
 there exists $\rho=\rho(\mathcal{F}, B)$ such t
 $\sup_{x\in B} \parallel \widetilde{\nabla} g(x)\parallel \geq \rho$ for any $g\in \mathcal{F}$.

\item[(ii)] There exist $C, \hspace{1mm}\alpha$ positive
 numbers such that $\| \widetilde{\nabla} g\|$ is $(C, \alpha)$-good on $V$, for any $g\in \mathcal{F}.$
\end{itemize}
\end{thm}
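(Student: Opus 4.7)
The plan is to expand $\widetilde{\g} g$ component-by-component as a $\bbq_{\nu}$-linear combination of a fixed, finite list of analytic functions, and then to derive (ii) from Corollary~\ref{linear} applied to that list, and (i) from a continuity--compactness argument after rescaling in the $a$-variable. Writing out
$$(\widetilde{\g} g)_i \;=\; \sum_{k<l}(D_kD'_l - D_lD'_k)\bigl(f_k\d_i f_l - f_l\d_i f_k\bigr) \;-\; a\sum_{k} D_k\,\d_i f_k,$$
one sees that each scalar component $(\widetilde{\g} g)_i$ lies in the fixed finite-dimensional $\bbq_{\nu}$-span $W$ of $\{f_k\d_i f_l - f_l\d_i f_k\}_{k<l,i} \cup \{\d_i f_k\}_{k,i}$, with coefficients depending on $g$ only through the pair $(D\wedge D',\,-aD)$.

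For part (ii), I would choose a subset $\psi_1,\dots,\psi_N$ of the above generators of $W$ for which $1,\psi_1,\dots,\psi_N$ is a $\bbq_{\nu}$-basis of $W+\bbq_{\nu}\cdot 1$; unique analytic continuation on the polydisc $U$ (available because each $f_\nu$ extends analytically to $\bar U_\nu$) then forces this linear independence to persist on every nonempty open subset of $U$. Applying Corollary~\ref{linear}(i) to $\{\psi_1,\dots,\psi_N\}$ yields a neighborhood $V\subseteq U$ and uniform constants $C,\alpha>0$ such that every $\bbq_{\nu}$-linear combination of $1,\psi_1,\dots,\psi_N$ is $(C,\alpha)$-good on $V$. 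Since each $(\widetilde{\g} g)_i$ is such a combination, and a maximum of finitely many uniformly $(C,\alpha)$-good functions is itself $(C,\alpha)$-good (part (iii) of the lemma just after Definition~4.1), we conclude $\|\widetilde{\g} g\|=\max_i|(\widetilde{\g} g)_i|$ is $(C,\alpha)$-good on $V$ with $g$-independent constants.

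For part (i), I would normalize by $\lambda(g):=\max(1,|a|)\geq 1$ so that the coefficient vector $\bigl((D\wedge D')/\lambda(g),\,-aD/\lambda(g)\bigr)$ always has sup-norm exactly $1$; the rescaled family $\{\widetilde{\g} g/\lambda(g)\}$ is then continuously parametrized by a compact subset of a finite-dimensional $\bbq_{\nu}$-space, making $g\mapsto\sup_B\|\widetilde{\g} g/\lambda(g)\|$ a continuous function on a compact parameter space. It suffices to verify this supremum is strictly positive pointwise: if it vanished at some $g$, analytic continuation would give $\widetilde{\g} g\equiv 0$ on $U$, i.e.\ $(D\cdot F)\g(D'\cdot F+a)=(D'\cdot F+a)\g(D\cdot F)$. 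Forming the ratio $h=(D\cdot F)/(D'\cdot F+a)$ on the open locus where the denominator is nonzero, observing $\g h\equiv 0$, and upgrading local to global constancy produces a nontrivial relation $c_1(D\cdot F)+c_2(D'\cdot F+a)=0$; linear independence of $1,f_1,\dots,f_n$ then forces $c_1D+c_2D'=0$ and $c_2 a=0$, which in each subcase contradicts one of $\|D\|=1$ or $\|D\wedge D'\|=1$. Compactness thus gives $\rho':=\inf_g\sup_B\|\widetilde{\g} g/\lambda(g)\|>0$, and $\sup_B\|\widetilde{\g} g\|\geq\lambda(g)\rho'\geq\rho'$.

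The main obstacle is the $p$-adic version of ``$\g h=0\Rightarrow h$ constant'' used in (i), where the usual connectedness-based proof fails in the totally disconnected setting. I would bypass it by exploiting the standing assumption that each $f_\nu$ extends analytically to $\bar U_\nu$, so that every analytic function appearing in the argument is represented by a single convergent power series on the polydisc $U$; in characteristic zero the vanishing of its gradient then kills every non-constant Taylor coefficient, making $h$ globally constant and validating the ratio-to-linear-dependence step.
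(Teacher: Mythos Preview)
Your approach to part~(ii) is correct and substantially simpler than the paper's. The paper splits into bounded $|a|$ (Lemma~\ref{bounded}, via compactness and Theorem~\ref{compact}) and unbounded $|a|$ (Lemma~\ref{unbounded}, via Taylor-polynomial approximation combined with the technical Lemmas~\ref{gamma} and~\ref{del-good}). Your observation that each component $(\widetilde\nabla g)_i$ lies in the fixed finite-dimensional span $W$ of analytic functions, so that Corollary~\ref{linear} applies directly with $g$-independent constants, bypasses the case split and the polynomial machinery entirely; the identity principle for a single convergent power series (which the paper itself implicitly assumes, writing $f_i=\sum a_\beta^{(i)}x^\beta$ in the proof of Lemma~\ref{unbounded}) supplies the required linear independence of $1,\psi_1,\dots,\psi_N$ on sub-neighborhoods. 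For part~(i) your strategy coincides with the paper's in spirit but has a small gap: the set of rescaled coefficient vectors $\bigl((D\wedge D')/\lambda(g),\,-aD/\lambda(g)\bigr)$ is not closed, since as $|a|\to\infty$ the first block tends to $0$ without ever equalling it; thus compactness requires passing to the closure, and your ratio argument does not address the resulting boundary points $(0,-D)$. This is easily repaired (at such a point the limit function is $-\nabla(D\cdot F)$, nonzero by the linear-independence hypothesis), or avoided altogether as the paper does: it argues by contradiction and treats the cases $a_n$ bounded and $|a_n|\to\infty$ separately, the latter via $\sup_B\|\widetilde\nabla g_n\|\ge|a_n|\inf_{\|D\|=1}\sup_B\|D\nabla F\|-O(1)\to\infty$. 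Note also that your ratio argument does not need global constancy of $h$: local constancy on a single ball already yields a linear relation among $1,f_1,\dots,f_n$ vanishing on that ball, which the theorem's hypothesis rules out.
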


\begin{proof}
The case $\nu=\infty$ is proposition 4.1 of \cite{BKM}, so we may assume $\nu$ is a finite place. We start with part (i) proceeding by contradiction. If not, one can
find a neighborhood $B$ of $x_0$ such that for any $n$, there
would exist $g_n\in \mathcal{F}$ with $\| \widetilde{\nabla}
g_n(x)\|\leq 1/n$ for any $x\in B$. Let
$g_n=(D_n\cdot F,\hspace{1mm}D'_n\cdot F+a_n)$. If there is a bounded
subsequence of ${a_n}$, going to a subsequence, we may assume
${g_n}$ is converging to $g\in \mathcal{F}$. Therefore $\|
\widetilde{\nabla} g(x)\|=0$ for any $x\in B$ which contradicts
linearly independence of $1, f_1, \cdots, f_n$. Thus we may assume
that, $a_n\rightarrow \infty$. However $\inf_{\| D\|=1} \sup_{x\in
B} \| D\nabla F(x)\|=\delta >0$, therefore $\sup_{x\in B} \|
\widetilde{\nabla} g_n(x)\|\rightarrow \infty$ in contrary to our
assumption.
\\

\noindent
Now we prove part (ii). The proof will be divided into two parts.
First we shall deal with the``compact" part of $\mathcal{F}$, i.e.
when we have an upper bounded on $|a|$, later we will prove the
unbounded part.

\begin{lem}~\label{bounded}
Let $U \hspace{1mm}\& \hspace{1mm}F$ be as in the theorem
\ref{tartibat} and $\mathcal{F}_M$ be $$\{(D\cdot F,
D'\cdot F+a)|\hspace{1mm} \|D\|=\|D'\|=\|D\wedge D'\|=1,\hspace{1mm} D,
D'\in \bbq_{\nu}^n,\hspace{1mm} a\in \bbq_{\nu},\hspace{1mm}
|a|_{\nu}\leq M\}.$$ Then there exist a neighborhood $V=V_M$ of
$x_0$, $\hspace{1mm}C=C_M$ and $\alpha=\alpha_M >0$ such that $\|
\widetilde{\nabla} g\|$ is $(C, \alpha)$-good on $V$ for any $g\in
\mathcal{F}_M$.
\end{lem}
\begin{proof}
Replacing $F(x)$ by $F(x+x_0)$ we may assume that $x_0=0.$ Then rescaling $x$ by $rx$ we may and will assume all the Taylor coefficients of $f_i$'s are in $\mathbb{Z}_{\nu},$ and $U\subseteq p_{\nu}\mathbb{Z}_{\nu}.$ Now for any $g\in \mathcal{F}_M$ there is $h_g\in \rm{GL}_m(\bbz_{\nu})$
such that all of the components of $\widetilde{\nabla}(g\circ h_g)$ are
non-zero functions. By the compactness assumption on
$\mathcal{F}_M$, we may find $h_1, \cdots, h_k\in
\rm{GL}_m(\bbz_{\nu})$ such that$$\sup_{\substack{1\leqslant
j\leqslant k\\\hspace{.5mm}x\in U}} |(\widetilde{\nabla}(g\circ
h_j))_i(x)|\geq \delta\hspace{2mm}\mbox{\rm{for}}\hspace{2mm} 1\leq
i\leq m.$$ Hence we can find $b=b_{\delta}$ with the following property: for any
$g\in\mathcal{F}_M$ there exists $1\leq j\leq k,$ such that for any $1\leq i\leq m$ one can find a multi-index $\beta$ with
$|\beta|\leq b,$ and $|\partial_{\beta}(\widetilde\g(g\circ h_j))_i(0)|\geq
\delta$. Using theorem \ref{compact}, there exist a neighborhood
$V'$ of the origin, $C$ and $\alpha >0$ such that for any $g\in
\mathcal{F}_M$ one can find $1\leq j\leq k$ so that
$\para\widetilde{\nabla}(g\circ h_j)\para$ is $(C,
\alpha)$-good on $V'$, which says $\|\widetilde{\nabla}g\|$ is good on
some $V'$ for any $g\in \mathcal{F}_M$.
\end{proof}
\noindent
To prove the unbounded part, we need the following lemma.

\begin{lem}~\label{gamma}
Set $H=(p_1, p_2, \cdots, p_n)$ where $p_i\in \bbz_{\nu}[x_1,
\cdots, x_m]$ are linearly
 independent polynomials of degree $\leq\hspace{1mm} l$. For any positive real number $r$
 let $H_r(x)=\frac{H(\lfloor r\rfloor_{\nu}x)}{\lfloor r\rfloor_{\nu}^{l}}$.
 Then there exist $\gamma$ and $0<s<1$ such that for any $D, D' \in \bbq_{\nu}^n$
 with $\|D\|=\|D'\|=\|D\wedge D'\|=1$, any $a\in\bbq_{\nu}$ with $|a|\geq p_{\nu}^l$
  and $r<s$ one has $$\|\widetilde{\nabla}P_r(x)\|_{{B}_1}\geq\hspace{1mm}\gamma(1+\|P_r\|_{{B}_1}),$$
 where $P_r=(D\cdot H_r,\hspace{1mm} D'\cdot H_r+\frac{a}{\lfloor r\rfloor_{\nu}^l})$. 
\end{lem}
\begin{proof}
\textbf{\textit{First Claim.}} For any $p(x)=\sum_{i=0}^d c_ix^i\in \bbq_{\nu}[x]$ and $0<\delta<1$, there exists $s$, such that for any $r<s$, one has
$$\sup_{x\in {B}_r}|p(x)|\hspace{1mm}\geq\hspace{1mm}\frac{|c_k|r^k}{k^k} , \hspace{1mm} \mbox{\rm{where}} \hspace{2mm} |c_k| \geq \delta \max_i\{|c_i|\}.$$ With the understanding that $0^0=1.$ 

\noindent
\textit{Proof of the first claim.}
We will see $s=\delta$ works. If $k=0$ then $|p(x)|=|c_0|$ for $|x|<\delta$ and there is nothing to prove. Otherwise, there exists $k>0$ such that $|\Phi_{k}(p)(x_1,\cdots,x_{k+1})|=|c_k|$ for $|x_i|<\delta.$ 
Take $x_1,\cdots,x_{k+1}$ such that $|x_i-x_j|\geq r/k$ for $1\leq i\neq j\leq k+1,$ where $r<s=\delta,$
Let $$q(x)=\sum_{i=1}^{k+1}p(x_i)\frac{\prod_{j\neq i}(x-x_j)}{\prod_{j\neq i}(x_i-x_j)},$$
 be the degree $k$, Lagrange polynomial of $p$ with respect to  $x_1,\cdots,x_{k+1}.$
 Then we get
 $$|c_k|=|\Phi_k(p)(x_1,\cdots,x_{k+1})|=|\Phi_k(q)(x_1,\cdots,x_{k+1})|$$
 $$=|\sum_{i=1}^{k+1}\frac{p(x_i)}{\prod_{j\neq i}(x_i-x_j)}|\leq\frac{\|p\|_{\mathbf{B}_r}}{(r/k)^k} .$$
  Therefore $\|p\|_{\mathbf{B}_r}\geq |c_k|r^k/{k^k}$ as we wanted to show.\vspace{1mm}

\noindent
\textbf{\textit{Second claim.}}
There exist $a_0, C',s>0$ such that for any $Q$ in $$ \widetilde{\mathcal{G}}_{a_0}=\{\widetilde{\nabla}(D\cdot H,D'\cdot H+a)|D,D'\hspace{1mm}\mbox{orthonormal vectors,}\hspace{1mm}|a|\geq a_0\},$$ and $r<s$, one has 
\begin{equation}~\label{norm}\para Q\para_{B_r}\ge C'|a|r^{l-1}.\end{equation}
\textit{Proof of the second claim.} 
As the family $\mathcal{G}=\{D\cdot H|\hspace{1mm}\|D\|=1\}$ is a compact family of functions, and $p_i$'s are linearly independent, there exists $\delta'>0$ such that $\para D\cdot H\para\ge \delta'$, for any $\para D\para=1$. Thus for any polynomial $p\in\mathcal{G}$, there exists a multi-index $\beta$ with $|\beta|=k\le l$ such that $\Phi_{\beta}(p)(\mathbf{0})\ge \delta'$. Hence for any such $p$, one may find $h_p\in \GL_m(\bbz_{\nu}), \delta_p,$ and $s_p,$ such that $|\Phi_1^k(p\circ h_p)(x_0,\cdots,x_k)|\ge \delta_p$ for any $x_j$'s with norm at most $s_p$. Now by the compactness of $\GL_m(\bbz_{\nu})$ and $\mathcal{G}$, there are $h_1,\cdots,h_t\in \GL_m(\bbz_{\nu})$, and positive numbers $\delta'', s'$ such that for any $p\in\mathcal{G}$, $|\Phi_1^k(p\circ h_i)(x_0,\cdots,x_k)|\ge \delta''\hspace{3mm}(*)$ for some $1\le i\le t$, and any $x_j$ with norm at most $s'$.
\\

\noindent
Now let $D$ and $D'$ be two orthonormal vectors and $g(x)=\widetilde{\g}(D\cdot H,D'\cdot H+a).$ As $\para g\para_{B_r}=\para g\circ h\para_{B_r}$ for any $h\in \GL_m(\bbz_{\nu})$, we may and will replace $g$ by $g\circ h_i$, where $i$ has been chosen such that $(*)$ holds for $p=D\cdot H$. Hence if $a_0=p_{\nu}^l,$ the coefficient of $x_1^{k-1}$ in the first component of $\widetilde{\g}(D\cdot H\circ h_i,D'\cdot H\circ h_i+a)$ has norm at least $\delta''|a|/p_{\nu}^l$, and moreover all the coefficients have norm at most $|a|$. Now let $x_2=\cdots=x_n=0$. We would get a one-variable polynomial whose coefficient of term $x_1^{k-1}$ has norm at least $\delta''/p_{\nu}^l$ times the maximum norm of all the coefficients.
Thus the first claim completes the proof of the second claim.

\noindent
\textbf{\textit{Final step.}}
let $P(x)=(D\cdot H,\hspace{1mm} D'\cdot H+a)$. Note that $\|P_{r}\|_{{B}_1}=|a|/r^l$, $\|\widetilde{\nabla}P_r\|=\|\widetilde{\nabla}P\hspace{1mm}\|/{r^{2l-1}}$. Using these and~\ref{norm}, one sees that $\gamma =\frac{C'}{2}$ works.
\end{proof}
Before proving the unbounded part, let us recall and give the needed modification of lemma 3.7 of \cite{BKM}. 
\begin{lem}\label{del-good}
Let $B\subseteq \bbq_{\nu}^d$ be an open ball of radius $r$, and $\widetilde{B}$ be the ball with the same center as $B$ and of radius $(p_{\nu}+1)\cdot r$. Let $f$ be a continuous function on $\widetilde{B}$. Suppose $C,\alpha$, and $\delta$ are positive real numbers such that
\[
|\{x\in B'|\h|f(x)|<\vare\cdot\sup_{x \in B'} |f(x)|\}|\le C \vare^{\alpha} |B'|,
\]
for any ball $B'\subseteq \widetilde{B}$ and $\vare\ge \delta$. Then $f$ is $(C,\alpha')$-good on $B$ whenever $0<\alpha'<\alpha$ and $Cp_{\nu}\delta^{\alpha-\alpha'}\le 1$.
\end{lem}
\begin{proof} The same argument as in ~\cite{BKM} works in the non-Archimedean setting, too. However we have to replace $\sup_{x\in B(y)}|f(x)|=\vare\cdot\sup_{x\in B}|f(x)|$, with
 \[\sup_{x\in B(y)}|f(x)|\le\vare\cdot\sup_{x\in B}|f(x)|\le\sup_{x\in B'(y)}|f(x)|,
 \]
 where $B'(y)$ is a ball  centered at $y$ whose radius is $p_{\nu}$ times the radius of $B(y)$.
Then use the covering of $B'(y)$'s instead of $B(y)$'s and note that  \[|B'(y)|=p_{\nu} |B(y)|.\]
\end{proof}
\begin{lem}~\label{unbounded}
Let $x_0,\hspace{1mm} U\hspace{1mm}\&\hspace{1mm}F$ be as in
theorem \ref{tartibat}, and 
$$\mathcal{F}'_M=\{(D\cdot F,\hspace{1mm}D'\cdot F+a)|\hspace{1mm} D, D'\hspace{1mm}\mbox{\rm{orthonormal},}\hspace{1mm}
a\in\bbq_{\nu},\hspace{1mm} |a|\geq M \}.$$  Then for sufficiently
large $M$ there exist neighborhood $V$ of $x_0$ and positive
numbers $C\hspace{1mm}\&\hspace{1mm} \alpha,$ such that for any
$g\in\mathcal{F}'_M$, $\|\widetilde{\nabla}g\|$ is
$(C,\alpha)$-good function on $V$ for any $g\in \mathcal{F}'_M$.
\end{lem}
\begin{proof}
Without loss of generality we assume $x_0=0$ and
$f_i(x)=\sum_{\beta\in \bbz^m} a_{\beta}^{(i)}x^{\beta},$
$\hspace{1mm}1\leq i\leq n$, where $a_{\beta}^{(i)}\in \bbz_{\nu}$
and $\|x\|<1$. Let $p_{l}^{(i)}$ be the $l^{th}$ degree Taylor
polynomial of $f_i$ then $|f_i(x)-p_{l}^{(i)}(x)|\leq
\|x\|^{l+1}$. Let $l$ be large enough such that $1, p_{l}^{(1)},
\cdots, p_{l}^{(n)}$ are linearly independent also let $r_0<s$ be
small enough such that 
$$2p_{\nu}\hspace{1mm}C_{m,2l-2}
(\frac{8r_0}{\gamma})^{1/m(2l-1)(2l-2)}\leq 1,$$ 
where
$s,\hspace{1mm} \gamma$ are given as in lemma \ref{gamma} and $C_{m,2l-2}$ is as in lemma \ref{polygood}. Now
take $M\geq p_{\nu}^l$ and consider $g(x)=(C\cdot F(x),\hspace{1mm}
D\cdot F(x)+a)$ from $\mathcal{F}'_M$ furthermore set
$p(x)=(C\cdot(p_{l}^{(1)}, \cdots, p_{l}^{(n)}),$ $\hspace{1mm}
D\cdot \hspace{1mm}(p_{l}^{(1)}, \cdots, p_{l}^{(n)})+a)$. By lemma~\ref{del-good}, it is enough to prove the following:

\noindent
$$(*)\begin{array}{l}\mbox{For} \h\frac{8r_0}{\gamma}\leq\vare\leq 1,\h\mbox{any ball}\h
B=B_r(x_1)\subseteq {B}_{r_0}(0)\h\mbox{and any}\h g\in
\mathcal{F}'_M,\h\mbox{one has:}\\
|\{ x\in B_r(x_1)|\hspace{1mm}\|\widetilde{\nabla}g(x)\|< \vare\cdot\sup_{x\in {B}}\|\widetilde{\nabla}g(x)\|\}|\leq 2C_{m,2l-2}\hspace{1mm}\vare^{1/m(2l-2)}|{B}|.\end{array}$$ 
\vspace{1mm}
Let
$g_r(x)=\frac{g(\lfloor r\rfloor_{\nu}x+x_1)}{\lfloor
r\rfloor_{\nu}^{l}}\hspace{2mm}\&\hspace{2mm}
p_r(x)=\frac{p(\lfloor r\rfloor_{\nu}x+x_1)}{\lfloor
r\rfloor_{\nu}^{l}}$, it is clear that $(*)$ holds if and only if
$$|\{ x\in {B}_1(0)|\hspace{1mm}\|\widetilde{\nabla}g_r(x)\|<
\vare\cdot\sup_{x\in {B}_1}\|\widetilde{\nabla}g_r(x)\|\}|\leq
2C_{m,2l-2}\hspace{1mm}\vare^{1/m(2l-2)}|{B}_1|,\hspace{2mm}(\dagger)$$
where $B_1$ is the ball of radius 1 about the origin.
However for any $x\in {B}_1$, $\|g_r(x)-p_r(x)\|<r \hspace{2mm},\hspace{2mm} \|\nabla
g_r(x)-\nabla p_r(x)\|<r$. Therefore
$$\hspace{2mm}\|\widetilde{\nabla} g_r(x)-\widetilde{\nabla} p_r(x)\|\leq r(r+2)(1+\|p_r(x)\|)$$
$$\leq 3r(1+\|p_r(x)\|)\leq \frac{3r}{\gamma}\sup_{x\in {B}_1}\|\widetilde{\nabla}p_r(x)\|.$$
Hence $\hspace{2mm}\{ x\in
{B}_1(0)|\hspace{1mm}\|\widetilde{\nabla}g_r(x)\|< \vare
\hspace{1mm}\sup_{x\in {B}_1}\|\widetilde{\nabla}g_r(x)\|\}$ is a subset of
$$\{ x\in {B}_1(0)|\hspace{1mm}\|\widetilde{\nabla}p_r(x)\|
-\frac{3r}{\gamma}\sup_{x\in {B}_1}\|\widetilde{\nabla}p_r(x)\|<
\vare(1+\frac{3r}{\gamma})\sup_{x\in
{B}_1}\|\widetilde{\nabla}p_r(x)\|\}$$ $$=\{ x\in
{B}_1(0)|\hspace{1mm}\|\widetilde{\nabla}p_r(x)\|<
(\vare(1+\frac{3r}{\gamma})+\frac{3r}{\gamma})\sup_{x\in
{B}_1}\|\widetilde{\nabla}p_r(x)\|\}$$
$$\subseteq\hspace{1mm}\{x\in {B}_1|\hspace{1mm}
\|\widetilde{\nabla}p_r(x)\|<\hspace{1mm}2\vare \hspace{1mm}
\sup_{x\in {B}_1}\|\widetilde{\nabla}p_r(x)\|\}.$$ Since
each of the components of $\widetilde{\nabla}p_r(x)$ is a
polynomial of degree at most $\hspace{1mm} 2l-2$, and
$\widetilde{\g}p_r$ is not zero, $(\dagger)$ holds, which finishes
the proof.
\end{proof}

\noindent Lemmas~\ref{bounded} and ~\ref{unbounded} complete the
proof of part(ii) of theorem~\ref{tartibat}.
\end{proof}

%%%%%%%%%%%%%%%%%%%%%%%%%%%%%%%%%%%%%%%%%%%%%%%%%%%%%%%%%%%%%%%%%%%%%%%%%%%%
%%%%%%%%%%%%%%%%%%%%%% Proof of the < theorem %%%%%%%%%%%%%%%%%%%%%%%%%%%%%%
%%%%%%%%%%%%%%%%%%%%%%%%%%%%%%%%%%%%%%%%%%%%%%%%
%%%%%%%%%%%%%%%%%%%%%%%%%%%%%%%%%%%%%%%%%%%%%%%%%%%%%%%%%%%%%%%%%%%%%%%%%%%%

\section{Theorem~\ref{<} and lattices}
In this chapter, following~\cite{KM},~\cite{BKM}, and~\cite{KT} we are going
to convert the problem into a quantitative question about ``special"
unipotent flows on the space of discrete $\bbz_S-$submodules.
\noindent
In the remaining part of this article, we let
$m_{\nu}=n+d_{\nu}+1$, and we are going to work with discrete $\bbz_S-$submodules of the
$\bbq_S$-module $X=\p \bbq_{\nu}^{m_{\nu}}$. We shall denote the
standard basis of the $\nu$-factor $\bbq_{\nu}^{m_{\nu}}$ of $X$ by
$\{e_{\nu}^{0},e_{\nu}^{*1},\cdots,e_{\nu}^{*d_{\nu}},e_{\nu}^1,\cdots,e_{\nu}^n\},$
let
$W_{\nu}^*=\{e_{\nu}^{*1},\cdots,e_{\nu}^{*d_{\nu}}\}_{\bbq_{\nu}}$,
$W_{\nu}=\{e_{\nu}^1,\cdots,e_{\nu}^{n-1}\}_{\bbq_{\nu}}$, and
$\Lambda$ be the $\bbz_S$-module generated by
$\e_0,\cdots,\e_n$,
 where $\e_i=(e_{\nu}^i)_{\nu\in S}$ for any $0\le i\le n.$ Take $\delta$,
 $K_{\nu}$'s, $T_i$'s, and the function $\mathbf{f}$ as in theorem~\ref{<}, and let
$$\u_{\x}=\left(\left(\begin{array}{ccc}1&0&f_{\nu}(x_{\nu})\\0&I_{d_{\nu}}&\g f_{\nu}(x_{\nu})
\\0&0&I_n\end{array}\right)\right)_{\nu\in S}.$$ One has
$$\u_{\x}\left(\left(\begin{array}{c}p\\0\\\vec{q}\end{array}\right)\right)_{\nu\in S}
=\left(\left(\begin{array}{c}p+f_{\nu}(x_{\nu})\cdot \vec{q}\\\g f_{\nu}(x_{\nu}) \vec{q}\\
\vec{q}\end{array}\right)\right)_{\nu\in S}.$$ So if $\lambda=\left(\left(\begin{array}{c}p
\\0\\\vec{q}\end{array}\right)\right)_{\nu\in S}$ has been chosen such that $\vec{q}$ satisfies
the conditions on the set~\ref{<}, and $|(p+f_{\nu}(x_{\nu})\cdot \vec{q})_{\nu\in S}|=
\l (f_{\nu}(x_{\nu})\cdot \vec{q})_{\nu\in  S}\r$, we get an upper bound on each
of the coordinates of $\u_{\x}\lambda$. Now we shall rescale the space to put
$\u_{\x}\lambda$ into a ``small" cube by multiplying it with a diagonal element
 $\mathbf{D}=(D_{\nu}=\diag((a_{\nu}^{(0)})^{-1},(a_{\nu}^*)^{-1},
 \cdots,(a_{\nu}^*)^{-1},(a_{\nu}^{(1)})^{-1},\cdots,(a_{\nu}^{(n)})^{-1})_{\nu\in S}$,
 where $a_{\nu}^{(0)}=\lceil\delta/\vare\rceil_{\nu}, a_{\nu}^{*}=\lceil K_{\nu}/\vare\rceil_{\nu},$
 and $a_{\nu}^{(i)}=\begin{cases}\lceil T_i/\vare\rceil_{\nu}\hspace{3mm}\nu\in S_{\mathcal{R}}\\\lceil1/\vare\rceil_{\nu}\hspace{4mm}\nu\in {S_{\mathcal{R}}}^c\end{cases}$ for any $1\le i\le n$.
Having this setting in mind, we state the next theorem which proves theorem~\ref{<}.

\begin{thm}~\label{unipotent}
Let $\mathbf{U}$ and $\mathbf{f}$ be as in theorem~\ref{<}; then
for any $\x=(x_{\nu})_{\nu\in S}$, there exists a neighborhood
$\mathbf{V}=\p V_{\nu}\subseteq \mathbf{U}$ of $\x$, and a
positive number $\alpha$ with the following property: for any
$B\subseteq V$ there exists $E>0$ such that for any
$\mathbf{D}=(\diag((a_{\nu}^{(0)})^{-1},(a_{\nu}^*)^{-1},
\cdots,(a_{\nu}^*)^{-1},(a_{\nu}^{(1)})^{-1},\cdots,(a_{\nu}^{(n)})^{-1}))_{\nu\in
S}$ with 
\begin{itemize}
\item[(i)] $0<|a_{\nu}^{(0)}|_{\nu}\le 1\le
|a_{\nu}^{(1)}|_{\nu}\le\cdots\le |a_{\nu}^{(n)}|_{\nu},$ and
\item[(ii)]$0<\p|a_{\nu}^*|_{\nu}\le \p\frac{1}{|a_{\nu}^{(0)}a_{\nu}^{(1)} \cdots
a_{\nu}^{(n-1)}|_{\nu}},$ 
\end{itemize}
and for any positive number $\vare$,
one has
$$|\{\mathbf{y}\in B|\hspace{1mm}c(\mathbf{D}\u_{\mathbf{y}}\lambda)<
\vare\hspace{1mm}\mbox{\rm{for some}}\hspace{1mm}\lambda\in\Lambda\setminus\{0\}\}|
\le E\hspace{1mm}\vare^{\alpha}|B|.$$
\end{thm}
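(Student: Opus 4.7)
The plan is to deduce Theorem~\ref{unipotent} from the $S$-arithmetic quantitative non-divergence theorem of Kleinbock--Tomanov, to be recalled in the next section. That theorem reduces the desired measure estimate to verifying, for every primitive $\bbz_S$-submodule $\Delta\subseteq\Lambda$ and any $\bbz_S$-basis $\mathbf{v}_1,\ldots,\mathbf{v}_r$ of $\Delta$ (which exists by Lemma~\ref{base}), two conditions on a controlled enlargement of $B$:
\textbf{(G)} the function $\psi_\Delta(\mathbf{y}):=\|\mathbf{D}\,\u_{\mathbf{y}}(\mathbf{v}_1\wedge\cdots\wedge\mathbf{v}_r)\|$ is $(C,\alpha)$-good on $\mathbf{V}$, with $C,\alpha$ independent of $\Delta$ and of $\mathbf{D}$;
\textbf{(L)} $\sup_{\mathbf{y}\in B}\psi_\Delta(\mathbf{y})\ge\rho$ for a fixed $\rho>0$.
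The constraints (i) and (ii) on $\mathbf{D}$ in the statement are tailored precisely to make (L) hold uniformly.

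The starting point is the explicit description of the $\nu$-components of $\mathbf{D}\u_{\mathbf{y}}\lambda$ for a single $\lambda=(p,0,\vec q)\in\Lambda$: they are $(a_\nu^{(0)})^{-1}(p+f_\nu(x_\nu)\cdot\vec q)$, $(a_\nu^*)^{-1}(\partial_i f_\nu(x_\nu)\cdot\vec q)$, and $(a_\nu^{(j)})^{-1}q_j$. Each is a constant or a linear combination of $1,f_\nu^{(1)},\ldots,f_\nu^{(n)}$ or of the $\partial_j f_\nu^{(i)}$, so Corollary~\ref{linear} together with the elementary properties of $(C,\alpha)$-good functions (closure under scalar multiplication and under taking sup-norms) yields (G) for rank-one primitive submodules on a neighborhood of $\mathbf{x}$. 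Condition (L) in rank one reduces to place-wise lower bounds on the largest $\mathbf{D}$-rescaled coordinate of $\u_{\mathbf{y}}\lambda$; when $\vec q\ne 0$, the entry $(a_\nu^{(n)})^{-1}q_n$ suffices thanks to hypotheses (i)--(ii), and when $\vec q=0,\,p\ne 0$ the constant coordinate does.

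For rank $r\ge 2$, the coordinates of the wedge $\mathbf{D}\u_{\mathbf{y}}(\mathbf{v}_1\wedge\cdots\wedge\mathbf{v}_r)$ are the rescaled $r\times r$ minors of the matrix whose rows are the $\u_{\mathbf{y}}\mathbf{v}_j$. Minors that do not mix an $e_\nu^0$-row with an $e_\nu^{*i}$-row expand into products of constants and linear combinations of $1,f_\nu^{(i)},\partial_j f_\nu^{(i)}$, and so are good by the same application of Corollary~\ref{linear}. The decisive case is the $2\times 2$ minor of precisely this mixed type; setting $g_j:=p^{(j)}+\vec q^{(j)}\cdot f_\nu$, it equals $g_1\partial_i g_2-g_2\partial_i g_1$, which is the $i$-th component of the skew gradient $\widetilde\nabla(g_1,g_2)$. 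Theorem~\ref{tartibat}(ii) then supplies the goodness needed in (G) and Theorem~\ref{tartibat}(i) the uniform supremum lower bound needed in (L). Higher-rank minors are handled by expanding along the non-analytic rows (those of the $q_j$-coordinates), which reduces them to the rank-one and rank-two cases already treated.

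The main obstacle is arranging that the constants in (G) and (L) are independent of the basis chosen for $\Delta$ and of the possibly unbounded coefficients in the relevant $2\times 2$ minors. The delicate regime is when the constant term $a$ in a skew-gradient pair $(D\cdot F,\,D'\cdot F+a)$ tends to infinity, where a naive compactness argument on the family $\mathcal{F}$ breaks down; this is exactly what the technical Lemma~\ref{gamma} and the perturbative Lemma~\ref{unbounded} (via Lemma~\ref{del-good}) are designed to control. Once these uniform bounds are in place, the Kleinbock--Tomanov quantitative non-divergence machinery assembles them into the stated estimate on $B$, with $E$ depending only on $B$, the family of coordinate functions, and the common $C,\alpha$.
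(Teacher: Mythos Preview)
Your overall plan---feed the map $\mathbf H(\mathbf y)=\mathbf D\,\u_{\mathbf y}$ into the Kleinbock--Tomanov non-divergence machine (Theorem~\ref{poset}) and verify a goodness condition and a uniform lower bound on primitive $\bbz_S$-submodules of $\Lambda$---is the paper's plan too. But there is a genuine gap in how you set up the norm-like map. You work with the ordinary wedge norm $\|\mathbf D\,\u_{\mathbf y}(\mathbf v_1\wedge\cdots\wedge\mathbf v_r)\|$, whereas the paper uses the map $\theta$ obtained by first projecting $\bigwedge\bbq_\nu^{m_\nu}$ through the ideal $\mathcal I^*_\nu$ generated by all $e_\nu^{*i}\wedge e_\nu^{*j}$, and only then taking norms (and the product over $\nu$). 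On single vectors $\theta$ agrees with $c(\cdot)$, which is all one needs in the conclusion; on higher wedges the projection kills exactly the minors that pick up two or more $e_\nu^{*i}$-columns. Those minors are (sums of) $2\times 2$ and larger Jacobian determinants such as $(\partial_1 f_\nu\cdot\vec q)(\partial_2 f_\nu\cdot\vec q')-(\partial_1 f_\nu\cdot\vec q')(\partial_2 f_\nu\cdot\vec q)$; they are genuinely quadratic in the $\partial_j f_\nu^{(i)}$ and are handled by neither Corollary~\ref{linear} nor Theorem~\ref{tartibat}. Your sentence ``and so are good by the same application of Corollary~\ref{linear}'' is where the argument breaks: products and sums of $(C,\alpha)$-good functions are not $(C,\alpha)$-good with uniform constants.

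A second, related point: even after restricting to the surviving components, expanding minors along the constant $e_\nu^{j}$-columns produces \emph{linear combinations} of skew-gradient components of different pairs, and goodness is not closed under addition. The paper avoids this by choosing, at each place, an orthonormal set $x_\nu^{(0)},\ldots,x_\nu^{(k-1)}$ adapted to the flag $W_\nu\subset W_\nu\oplus\bbq_\nu e_\nu^n\subset\cdots$, so that $\pi_\nu(H(\x)\mathcal Y_\nu)$ splits into summands lying in mutually orthogonal basis directions of the quotient; the norm is then the \emph{maximum} of their norms, and goodness transfers. The lower-bound step (your (L), the paper's (III)) also relies on this basis: one isolates a single term of the form $z_\nu^{*}(x_\nu)\wedge e_\nu^n\wedge x_\nu^{(1)}\wedge\cdots\wedge x_\nu^{(k-2)}$ and combines Theorem~\ref{tartibat}(i) with an eigenvalue comparison that uses hypotheses (i)--(ii) on $\mathbf D$. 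Your rank-one sketch (``the entry $(a_\nu^{(n)})^{-1}q_n$ suffices'') already fails when $q_n=0$; the paper instead bounds the $e_\nu^0$-coordinate from below via $\rho_1$ and $|a_\nu^{(0)}|_\nu\le 1$.
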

\begin{proof}[Proof of theorem~\ref{<} modulo theorem~\ref{unipotent}]
 Using a permutation without loss of generality, one can assume that
 $T_1\le T_2\le\cdots\le T_n$. Now let $\vare$ as in theorem~\ref{<}.
 It is easy to verify that if one defines $a_{\nu}^{(i)}$'s and $a_{\nu}^*$ as in the setting
 of beginning of this section, they satisfy conditions of theorem~\ref{unipotent}.
 Hence theorem~\ref{unipotent} provides us with a neighborhood $\mathbf{V}$ and a positive number $\alpha$. Using the discussion in the beginning of
 this section and the fact that $c(\x)\leq \|\x\|_S^{\kappa}$, one sees that ${\alpha}/{\kappa}$ and $\mathbf{V}$ satisfy the conditions of theorem~\ref{<}.
\end{proof}

%%%%%%%%%%%%%%%%%%%%%%%%%%%%%%%%%%%%%%%%%%%%%%%%%%%%%%%%%%%%%%%%%%%%%%%%%%%%
%%%%%%%%%%%%%%%%%%%%%% Proof of theorem{unipotent} %%%%%%%%%%%%%%%%%%%%%%%%%
%%%%%%%%%%%%%%%%%%%%%%%%%%%%%%%%%%%%%%%%%%%%%%%%%%%%%%%%%%%%%%%%%%%%%%%%%%%%
\section{Proof of theorem~\ref{unipotent}}~\label{unip}
In this section, using the following generalization of~\cite[section 4]{KM} 
proved in~\cite[section 5]{KT}, we will prove
theorem~\ref{unipotent}. Before stating the theorem, let us recall the notion of {\it norm-like map} (see~\cite[section 6]{KT}).
\begin{definition}~\label{norm-like}
Let $\Omega$ be the set of all discrete $\bbz_S$-submodules of $\p\bbq_{\nu}^{m_{\nu}}$. A function $\theta$ from $\Omega$ to the positive real numbers is called a {\it norm-like map} if the following three properties hold:
\begin{itemize}
\item[i)] For any $\Delta, \Delta'$ with $\Delta'\subseteq\Delta$ and the same $\bbz_S$-rank, one has $\theta(\Delta)\leq\theta(\Delta')$.
\item[ii)] For any $\Delta$ and $\gamma\not\in\Delta_{\bbq_S}$, one has $\theta(\Delta+\bbz_S\gamma)\leq\theta(\Delta)\theta(\bbz_S\gamma)$.
\item[iii)] For any $\Delta$, the function $g\mapsto\theta(g\Delta)$ is a continuous function of\\ $g\in\GL(\p\bbq_{\nu}^{m_{\nu}})$. 
\end{itemize}
\end{definition}

\begin{thm}~\label{poset}
Let $\mathbf{B}=\mathbf{B}(\mathbf{x}_0,r_0)\subset\prod_{\nu\in S} \bbq_{\nu}^{d_\nu}$ and $\widehat{\mathbf{B}}=\mathbf{B}(\mathbf{x}_0,3^m r_0)$ for $m=\min_{\nu}{(m_{\nu})}.$ Assume that $\mathbf{H}:\widehat{\mathbf{B}}\rightarrow \rm{GL}(\prod_{\nu\in S} \bbq_{\nu}^{m_\nu})$ is a continuous map. Also let $\mathbf{\theta}$ be a norm-like map defined on the set $\Omega$ of discrete $\bbz_S$-submodules of $\prod_{\nu\in S} \bbq_{\nu}^{m_\nu},$ and $\mathfrak{P}$ be a subposet of $\Omega$. For any $\Gamma\in\mathfrak{P}$ denote by $\psi_{\Gamma}$ the function $\mathbf{x}\mapsto \mathbf{\theta}(\mathbf{H}(\mathbf{x})\Gamma)$ on $\widehat{\mathbf{B}}.$ Now suppose for some $C,\alpha>0$ and $\rho>0$ one has
\begin{itemize}
\item[(i)] for every $\Gamma\in\mathfrak{P},$ the function $\psi_{\Gamma}$ is $(C,\alpha)$-good on $\widehat{\mathbf{B}};$

\item[(ii)] for every $\Gamma\in\mathfrak{P},\hspace{1mm}\sup_{\mathbf{x}\in \mathbf{B}}\|\psi_{\Gamma}(\mathbf{x})\|_S\geq\rho;$

\item[(iii)] for every $\mathbf{x} \in \widehat{\mathbf{B}},\hspace{2mm} \#\{\Gamma\in\mathfrak{P} |\hspace{1mm}\|\psi_{\Gamma}(\mathbf{x})\|_S\leq\rho\}<\infty.$
\end{itemize}
Then for any positive $\vare\leq\rho$ one has $$|\{\mathbf{x}\in
\mathbf{B}|\hspace{1mm}\mathbf{\theta}(\mathbf{H}\mathbf{(x)}\lambda)<\vare
\hspace{1mm}\mbox{\rm{for
some}}\hspace{1mm}\lambda\in\Lambda\smallsetminus\{0\}\}|\leq
mC(N_{((d_{\nu}),S)}D^2)^m{(\frac{\vare}{\rho})}^{\alpha}|\mathbf{B}|,$$
where $D$ may be taken to be $\hspace{1mm}
3^{d_{\infty}}\prod_{\nu\in S_f}(3p_{\nu})^{d_{\nu}},$ and $N_{((d_{\nu}),S)}$ is the Besicovich constant for the space $\p \mathbb{Q}_{\nu}^{d_{\nu}}.$

\end{thm}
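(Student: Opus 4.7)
The plan is to adapt the Kleinbock--Margulis inductive scheme for quantitative non-divergence from \cite{KM} to the present $S$-arithmetic/poset setting, as carried out in \cite{KT}. The three hypotheses play complementary roles: (i) gives the crucial decay estimate on individual balls, (ii) provides a globally ``substantial'' element of $\mathfrak{P}$ to compare against, and (iii) ensures that infima over $\mathfrak{P}$ are actually attained at every point, so that the notion of a ``minimal dangerous'' submodule is well-defined.

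First I would set up the induction on the depth of chains in $\mathfrak{P}$, which is bounded by $m=\min_{\nu} m_{\nu}$ because any chain of discrete $\bbz_S$-submodules in $\prod_{\nu\in S}\bbq_\nu^{m_\nu}$ of strictly increasing rank has length at most $m$ (using Lemma~\ref{base}). For $\x$ in the bad set, hypothesis (iii) lets me pick a $\Gamma(\x)\in\mathfrak{P}$ minimal (in the poset order) among those with $\psi_{\Gamma}(\x)<\vare$. Partitioning the bad set by the $\mathfrak{P}$-rank of $\Gamma(\x)$ produces at most $m$ pieces, and the total bound comes from summing the contribution of each piece.

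Next, for each $\x$ I would choose a ball $\mathbf{B}_\x \subseteq \widehat{\mathbf{B}}$ centered at $\x$, as small as possible, such that $\sup_{\mathbf{B}_\x}\psi_{\Gamma(\x)}\ge\rho$ (such a ball exists in view of (ii) applied to a translate, at the cost of allowing $\mathbf{B}_\x$ to extend into the enlargement $\widehat{\mathbf{B}}$, which explains why the enlargement radius is $3^m r_0$: one loses a factor of $3$ per inductive level). Applying (i) to $\psi_{\Gamma(\x)}$ on (a slight enlargement of) $\mathbf{B}_\x$ yields
\[
\bigl|\{\ybf\in\mathbf{B}_\x \,:\, \psi_{\Gamma(\x)}(\ybf)<\vare\}\bigr| \le C(\vare/\rho)^\alpha |\mathbf{B}_\x|.
\]
The passage from a ball to its enlargement in the $S$-arithmetic geometry costs a volume factor $D=3^{d_\infty}\prod_{\nu\in S_f}(3p_\nu)^{d_\nu}$; one factor of $D$ is needed to accommodate the Besicovitch selection and another for the $(C,\alpha)$-good application, giving $D^2$ per level.

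The balls $\{\mathbf{B}_\x\}$ cover the (rank-$k$ slice of the) bad set, so the Besicovitch covering theorem for $\prod_{\nu\in S}\bbq_\nu^{d_\nu}$ extracts a subcover of multiplicity at most $N_{((d_\nu),S)}$, and summing the local estimate above yields the bound $CN_{((d_\nu),S)}D^2(\vare/\rho)^\alpha|\mathbf{B}|$ for each of the $m$ rank-slices. The main obstacle is the inductive step where the minimality of $\Gamma(\x)$ must be propagated: at a point in $\mathbf{B}_\x$ where $\psi_{\Gamma(\x)}\ge\vare$ but the point is still bad, some proper sub- or super-module in $\mathfrak{P}$ must be responsible, and one needs the sub-multiplicativity property (ii) of Definition~\ref{norm-like} to compare $\theta$-values along a chain and reduce to a lower-rank instance. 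Iterating this reduction $m$ times produces the final factor $mC(N_{((d_\nu),S)}D^2)^m(\vare/\rho)^\alpha$, completing the proof.
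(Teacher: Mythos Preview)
The paper does not give its own proof of this theorem: it is stated explicitly as ``the following generalization of~\cite[section 4]{KM} proved in~\cite[section 5]{KT}'', and is simply quoted without argument. So there is no proof in the paper to compare your proposal against; the authors rely entirely on the reference.

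That said, your sketch is a reasonable high-level summary of the Kleinbock--Margulis/Kleinbock--Tomanov quantitative non-divergence scheme. A couple of points where your description drifts from the actual argument in \cite{KM,KT}: the induction is not literally a partition of the bad set by the rank of a single minimal $\Gamma(\x)$ into $m$ pieces that are then bounded separately. Rather, one \emph{marks} points by flags of submodules and inducts on the length of the flag; at each step one shrinks the ball (this is where the factor $3$ per level in the radius, hence the enlargement $3^m r_0$, comes from) and uses the submultiplicativity of $\theta$ together with minimality to pass to a smaller poset. The factor $m$ in front arises from summing a geometric-type series over the inductive levels, not from a disjoint rank decomposition. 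Also, your invocation of hypothesis (ii) ``applied to a translate'' is not quite right: (ii) is only assumed on the original ball $\mathbf{B}$, and the point of the inductive mechanism is precisely to propagate an effective version of it to the smaller balls via the norm-like axioms. These are refinements rather than fatal gaps, and since the paper itself defers entirely to \cite{KT}, your outline is already more detailed than what the paper provides.
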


\noindent
To this end, we need to define a poset
$\pfr$, a norm-liked map $\theta$, a family $\mathcal{H}$ of functions, and verify the conditions of theorem~\ref{poset}
for our choices of $\mathfrak{P}$, $\theta$, and any function $\bf{H}$ in $\mathcal{H}$.
We shall start with introducing a norm-like map
$\theta$ from $\p\bigwedge \bbq_{\nu}^{m_{\nu}}$ to $\bbr^{+}$, and
then ``restrict" it to the poset of discrete $\bbz_S$-submodules of
$\p\bbq_{\nu}^{m_{\nu}}$. For each $\nu\in S$ let $\mathcal{I}^{*}_{\nu}$ be the ideal
generated by $e_{\nu}^{*i}\wedge e_{\nu}^{*j}$, for $1\le i,j\le
d_{\nu}$, and $\pi_{\nu}$ be the natural map from
$\bigwedge\bbq_{\nu}^{m_{\nu}}$ to
$\bigwedge\bbq_{\nu}^{m_{\nu}}/\mathcal{I}^{*}_{\nu}$. Define
$\theta_{\nu}(x_{\nu})=\para
\pi_{\nu}(x_{\nu})\para_{\pi_{\nu}(\bfr_{\nu})}$, where $\bfr_{\nu}$ is
the standard basis of $\bigwedge\bbq_{\nu}^{m_{\nu}}$, and let
$\theta(\x)=\p\theta_{\nu}(x_{\nu})$. For any
discrete $\bbz_S$-submodule $\Delta$ of $\p\bbq_{\nu}^{m_{\nu}}$,
let $\theta(\Delta)=\theta(\x^{(1)}\wedge\cdots\wedge\x^{(r)})$,
where $\{\x^{(1)},\cdots,\x^{(r)}\}$ is a $\bbz_S$-base of $\Delta$.
Using the product formula, it is easy to see that $\theta(\Delta)$ is well-defined, and it is a norm-like map. Now
let $\pfr$ be the poset of primitive $\bbz_S$-submodules of
$\Lambda,$ where $\Lambda$ is defined in setion 4. Let $\mathcal{H}$ be the family of functions
$$\bf{H}:\mathbf{U}=\p U_{\nu}\rightarrow {\rm
GL}(\p\bbq_{\nu}^{m_{\nu}})\hspace{3mm}\mbox{\rm{where}}\hspace{3mm}
\bf{H}(\x)=\mathbf{D}\u_{\x},$$ for any $\mathbf{D}$ satisfying
conditions of theorem~\ref{unipotent}. Since the restriction of
$\theta$ to $\p\bbq_{\nu}^{m_{\nu}}$ is the same as the function
$c$, to prove theorem~\ref{unipotent}, it suffices
to find a neighborhood $\mathbf{V}$ of $\x$ and establish the
following statements for such $\mathbf{V}$.
\begin{itemize}
\item[(I)] There exist $C,\alpha>0$, such that all the functions
$\mathbf{y}\mapsto\theta(\mathbf{H} (\ybf)\Delta)$, where $\bf{H}\in\mathcal{H}$
 and $\Delta\in\pfr$ are $(C,\alpha)$-good on $\mathbf{V}$.
\item[(II)]For all $\ybf\in\mathbf{V}$ and $\bf{H}\in\mathcal{H}$, one
has
$\#\{\Delta\in\pfr|\hspace{1mm}\theta(\mathbf{H} (\ybf)\Delta)\le1\}<\infty.$
\item[(III)] For every ball $\mathbf{B}\subseteq\mathbf{V}$,
 there exists $\rho>0$ such that $\sup_{\ybf\in\bbf}\theta(\mathbf{H}(\ybf)\Delta)\ge\rho$
 for all $\bf{H}\in\mathcal{H}$ and $\Delta\in\pfr$.
\end{itemize}
\noindent
We now verify (I-III) which will finish the proof of the theorem~\ref{<}\vspace{1mm}

\noindent \textit{\textbf{Proof of (I).}} Let ${\rm
rank}_{\bbz_S}\Delta=k\le n+1$, and let
$(\dbf\Delta)_{\nu}$ be the $\mathbb{Q}_{\nu}$-span of the projection of $\dbf\Delta$ to the $\nu$ place; then by
lemma~\ref{base} $\dim_{\bbq_{\nu}}(\dbf\Delta)_{\nu}=k$, for any
$\nu\in S$. We choose an orthonormal set
$x_{\nu}^{(1)},\cdots,x_{\nu}^{(k-1)}\in (\dbf\Delta)_{\nu}\cap
W_{\nu}\oplus\bbq_{\nu}e_{\nu}^{n}.$ By adding $e_{\nu}^0$ and
possibly another vector $x_{\nu}^{(0)}$ from
$(\dbf\Delta)_{\nu}\oplus\bbq_{\nu}e_{\nu}^0$ to the set of
$x_{\nu}^{(i)}$'s, we can get an orthonormal base of
$(\dbf\Delta)_{\nu}\oplus\bbq_{\nu}e_{\nu}^0$. 
Let $\{\ybf^{(1)},\cdots,\ybf^{(k)}\}$ be a $\bbz_S$-base of $\Delta$. 
Therefore
$\theta(\dbf\Delta)=\theta(\dbf\mathcal{Y})$, where
$\mathcal{Y}=\ybf^{(1)}\wedge\cdots\wedge\ybf^{(k)}$. Take
$a_{\nu},b_{\nu}\in\bbq_{\nu}$ such that
$$(\dbf\mathcal{Y})_{\nu}=a_{\nu} e_{\nu}^0\wedge
x_{\nu}^{(1)}\wedge\cdots\wedge
x_{\nu}^{(k-1)}+b_{\nu}x_{\nu}^{(0)}\wedge\cdots\wedge
x_{\nu}^{(k-1)}.$$ Let $\g^*\bar
g(x_{\nu})=\sum_{i=1}^{d_{\nu}}\d_i \bar g(x_{\nu})e_{\nu}^{*i},$
for any function $\bar g$ from an open subset of
$\bbq_{\nu}^{d_{\nu}}$ to $\bbq_{\nu}$, and define
$\widetilde{\g}^*(g)(x)=g_1(x)\g^*g_2(x)-g_2(x)\g^*g_1(x)$, where
$g_1$ and $g_2$ are two functions from an open subset of
$\bbq_{\nu}^{d_{\nu}}$ to $\bbq_{\nu}$, and
$g(x)=(g_1(x),g_2(x))$. Let us also define
$\hat{\mathbf{f}}(\x)=(\hat{f}_{\nu}(x_{\nu}))_{\nu\in S},$ where
$$\hat{f}_{\nu}(x_{\nu})=(1,0_{d_{\nu}},
\frac{a_{\nu}^{(1)}}{a_{\nu}^{(0)}}f_{\nu}^{(1)}(x_{\nu}),\cdots,
\frac{a_{\nu}^{(n)}}{a_{\nu}^{(0)}}f_{\nu}^{(n)}(x_{\nu})).$$ In this
setting it is easy to see that
$$(\dbf\u_{\x}\dbf^{-1})_{\nu}w=w+(\hat{f}_{\nu}(x_{\nu})\cdot
w)e_{\nu}^0+\frac{a_{\nu}^{(0)}}{a_{\nu}^*}\g^*(\hat{f}_{\nu}(x_{\nu})w),$$
whenever $w$ is in $W_{\nu}\oplus\bbq_{\nu}e_{\nu}^n$. Therefore
we have

$$\pi_{\nu}((H(\x)\mathcal{Y})_{\nu})
=(a_{\nu}+b_{\nu}\hat{f}_{\nu}(x_{\nu})x_{\nu}^{(0)})e_{\nu}^0\wedge
x_{\nu}^{(1)}\wedge\cdots\wedge
x_{\nu}^{(k-1)}+b_{\nu}x_{\nu}^{(0)}\wedge\cdots\wedge
x_{\nu}^{(k-1)}$$
$$\hspace*{10mm}+b_{\nu}\sum_{i=1}^{k-1}
\pm(\hat{f}_{\nu}(x_{\nu})x_{\nu}^{(i)})e_{\nu}^0\wedge\bigwedge_{s\neq
i}
x_{\nu}^{(s)}+b_{\nu}\frac{a_{\nu}^{(0)}}{a_{\nu}^*}\sum_{i=0}^{k-1}
\pm\g^*(\hat{f}_{\nu}(x_{\nu})x_{\nu}^{(i)})\wedge\bigwedge_{s\neq
i} x_{\nu}^{(s)}$$

\begin{equation}~\label{eq:norm}+\frac{a_{\nu}^{(0)}}{a_{\nu}^*}\sum_{i=1}^{k-1}\pm
\widetilde{\g}^*(\hat{f}_{\nu}(x_{\nu})x_{\nu}^{(i)},a_{\nu}+b_{\nu}\hat{f}_{\nu}(x_{\nu})x_{\nu}^{(0)})
\wedge e_{\nu}^0\wedge\bigwedge_{s\neq
0,i}x_{\nu}^{(s)}\hspace{7mm}\end{equation}

$$+b_{\nu}\frac{a_{\nu}^{(0)}}{a_{\nu}^*}\sum_{i,j=1, j>i}^{k-1}\pm
\widetilde{\g}^*(\hat{f}_{\nu}(x_{\nu})x_{\nu}^{(i)},\hat{f}_{\nu}(x_{\nu})x_{\nu}^{(j)})
\wedge e_{\nu}^0\wedge\bigwedge_{s\neq
i,j}x_{\nu}^{(s)}.\hspace{7mm}$$
By the choice of $x_{\nu}^{(i)}$'s, norm of the above vector would
be the maximum of norms of each of its summands. Using the fact
that maximum of a family of $(C_{\nu},\alpha_{\nu})$-good functions is again a
$(C_{\nu},\alpha_{\nu})$-good function, it suffices to show that the norm of
each of these summands is a $(C_{\nu},\alpha_{\nu})$-good function for a fixed
$C_{\nu}$ and $\alpha_{\nu}$. By theorem~\ref{linear}, we find a neighborhood
$V_{\nu}^1$ of $x_{\nu}$, $C_{\nu}^1$ and $\alpha_{\nu}^1>0$ such that the first
two lines would be $(C_{\nu}^1,\alpha_{\nu}^1)$-good functions on
$V_{\nu}^1$. Also, theorem~\ref{tartibat} provides us a
neighborhood $V_{\nu}^2$ of $x_{\nu}$, $C_{\nu}^2$, and $\alpha_{\nu}^2>0$ so
that the rest would be $(C_{\nu}^2,\alpha_{\nu}^2)$-good functions. 
Hence corollary 2.3 of~\cite{KT} gives us the claim.

\noindent
\textit{\textbf{Proof of (II).}} By looking at the first line of the
equation~(\ref{eq:norm}), one can see that
$\theta(\dbf\u_{\x}\Delta)\ge
\p\max\{|a_{\nu}+b_{\nu}\hat{f_{\nu}}(x_{\nu})\cdot x_{\nu}^{(0)}|,|b_{\nu}|\}$. Thus
$\theta(\dbf\u_{\x}\Delta)\le 1$ implies that
$\p\max\{|a_{\nu}|,|b_{\nu}|\}$ has an upper bound. Therefore using corollary 7.9 of \cite{KT}, we would get the finiteness of such $\Delta$'s, as we claimed.\vspace{1mm}

\noindent
\textit{\textbf{Proof of (III).}} Let $\mathbf{V}$ be the
neighborhood of $\x$ given by theorem~\ref{tartibat},
$\bbf\subseteq\mathbf{V}$ be a ball containing $\x, M, \rho_1,
\rho_2,$ and $\rho_3$ be as follows
$$\rho_1=\inf\{|f_{\nu}(x_{\nu})\cdot
Z_{\nu}+z_{\nu}^{0}|_{\nu}\hspace{1mm}|\hspace{1mm}\x\in\bbf, \nu\in S,
Z_{\nu}\in\bbq_{\nu}^n,
\para Z_{\nu}\para=1, z_{\nu}^{0}\in\bbq_{\nu}\},$$
$$\rho_2=\inf\{\sup_{\x\in\bbf}\para\g
f_{\nu}(x_{\nu})Z_{\nu}\para\hspace{1mm} |\nu\in S,
Z_{\nu}\in\bbq_{\nu}^n, \para Z_{\nu}\para=1\},\hspace{3cm}$$
 and $\rho_3$ is given
by theorem~\ref{tartibat}(a), and
$M=\sup_{\x\in\bbf}\max\{\para\fbf(\x)\para_S,\para\g\fbf(\x)\para_S\}.$

\noindent
If $\rank_{\bbz_S}\Delta=1$, then $\Delta$ can be represented by a
vector $\mathbf{w}=(w_{\nu})_{\nu\in S}$, with $w^{i}_{\nu}\in\mathbb{Z}_S$ for all $i$'s and for any $\nu\in S$. The first coordinate of $\dbf\u_{\x}\wbf$ is then equal to
$$(\frac{1}{a_{\nu}^{(0)}}(w_{\nu}^{(0)}+\sum_{i=1}^n
f_{\nu}^{(i)}(x_{\nu})w_{\nu}^{(i)}))_{\nu\in S}.$$ Therefore
$c(\dbf\u_{\x}\wbf)\ge\rho_1^{\kappa}$ since $|a_{\nu}^{(0)}|\le
1$.

Now assume $\rank_{\bbz_S} \Delta=k>1$. As in part (I), let us denote the $\mathbb{Q}_{\nu}$ span of the projection to $\nu$ place of $\Delta$ by $\Delta_{\nu}$. Let $x_{\nu}^{(1)},\cdots,x_{\nu}^{(k-2)}$ be an orthonormal set in $W_{\nu}\cap \Delta_{\nu}.$ We extend this to an orthonormal set in $(W_{\nu}\oplus \bbq_{\nu}e_{\nu}^n)\cap\Delta_{\nu}$ by adding $x_{\nu}^{(k-1)}$. Now if necessary choose a vector $x_{\nu}^{(0)}$ to complete $\{e_{\nu}^0,x_{\nu}^{(1)},\cdots,x_{\nu}^{(k-1)}\}$ to an orthonormal basis of $\Delta_{\nu}+\bbq_{\nu}e_{\nu}^0$.

Let $\{\ybf^{(1)},\cdots,\ybf^{(k)}\}$ be a $\bbz_S$-base of $\Delta$, and define $\mathcal{Y}=\ybf^{(1)}\wedge\cdots\wedge\ybf^{(k)}$. Since $D_{\nu}$ leaves $W_{\nu}, W_{\nu}^*, \bbq_{\nu} e_{\nu}^0$, and $\bbq_{\nu}e_{\nu}^n$ invariant, one has
$$\theta(\dbf\u_{\x}\Delta)=\theta(\dbf\u_{\x}\mathcal{Y})=\p\theta_{\nu} (D_{\nu}\u_{\x}^{\nu}\mathcal{Y}_{\nu})=\p \|D_{\nu}\pi_{\nu}(\u_{\x}^{\nu}\mathcal{Y}_{\nu})\|_{\nu}.$$
On the other hand, similar to the discussion in (I), there are $a_{\nu},b_{\nu}\in\bbq_{\nu}$ so that
$$\mathcal{Y}_{\nu}=a_{\nu} e_{\nu}^0\wedge
x_{\nu}^{(1)}\wedge\cdots\wedge
x_{\nu}^{(k-1)}+b_{\nu}x_{\nu}^{(0)}\wedge\cdots\wedge
x_{\nu}^{(k-1)}.$$
Note also that $\p\{|a_{\nu}|_{\nu},|b_{\nu}|_{\nu}\}\ge 1$. Similar to the argument of~\cite[Section 7]{BKM}, let $\check{\fbf}(\x)=(\cf_{\nu}(x_{\nu}))_{\nu\in S}$, where $$\cf_{\nu}(x_{\nu})=(1,0_{d_{\nu}},f_{\nu}^{(1)}(x_{\nu}),\cdots,f_{\nu}^{(n)}(x_{\nu})),$$ we would have:

$$\pi_{\nu}(\u^{\nu}_{\x}\mathcal{Y}_{\nu})
=(a_{\nu}+b_{\nu}\cf_{\nu}(x_{\nu})x_{\nu}^{(0)})e_{\nu}^0\wedge
x_{\nu}^{(1)}\wedge\cdots\wedge
x_{\nu}^{(k-1)}+b_{\nu}x_{\nu}^{(0)}\wedge\cdots\wedge
x_{\nu}^{(k-1)}$$
$$\hspace*{10mm}+b_{\nu}\sum_{i=1}^{k-1}
\pm(\cf_{\nu}(x_{\nu})x_{\nu}^{(i)})e_{\nu}^0\wedge\bigwedge_{s\neq
i}
x_{\nu}^{(s)}+b_{\nu}\sum_{i=0}^{k-1}
\pm\g^*(\cf_{\nu}(x_{\nu})x_{\nu}^{(i)})\wedge\bigwedge_{s\neq
i} x_{\nu}^{(s)}$$
$$+e_{\nu}^0\wedge \check{\mathcal{Y}}_{\nu}(x_{\nu}),\hspace{7cm}$$
$$\mbox{where}\hspace{2mm}\check{\mathcal{Y}}_{\nu}(x_{\nu})=\sum_{i=1}^{k-1}\pm
\widetilde{\g}^*(\cf_{\nu}(x_{\nu})x_{\nu}^{(i)},a_{\nu}+b_{\nu}\cf_{\nu}(x_{\nu})x_{\nu}^{(0)})
\wedge\bigwedge_{s\neq 0,i}x_{\nu}^{(s)}\hspace{7mm}$$
$$+b_{\nu}\sum_{i,j=1, j>i}^{k-1}\pm
\widetilde{\g}^*(\cf_{\nu}(x_{\nu})x_{\nu}^{(i)},\cf_{\nu}(x_{\nu})x_{\nu}^{(j)})
\wedge\bigwedge_{s\neq
i,j}x_{\nu}^{(s)}.\hspace{7mm}$$
In order to find a lower bound $\rho$ for $\hspace{1mm}\sup_{x\in\mathbf{B}} \theta(\mathbf{D}\mathcal{U}_{\mathbf{x}}\mathcal{Y}),$ it suffices to show that $\sup \p\|D_{\nu}\check{\mathcal{Y}}_{\nu}(x_{\nu})\|_{\nu}$ is not less that $\rho \p|a_{\nu}^{(0)}|_{\nu}.$ Now consider the product $\mathbf{e}_n\wedge \check{\mathcal{Y}}(\mathbf{x}).$ Our next task is to show: $$(*)\hspace{2mm}\sup\p\|e_{\nu}^n\wedge\check{\mathcal{Y}}_{\nu}(x_{\nu})\|_{\nu}\geq \rho.$$ Assume that $(*)$ holds and let us finish the proof. Since the eigenvalue with the smallest norm of $D_{\nu}$ on $W_{\nu}^{*}\wedge (\bigwedge^{k-1}(\bbq_{\nu} e_{\nu}^0\oplus W_{\nu}\oplus\bbq_{\nu}e_{\nu}^n))$ is equal to ${(a_{\nu}^{(*)}a_{\nu}^{(n-k+2)}\cdots a_{\nu}^{(n)})}^{-1},$ using $\|D_{\nu}(e_{\nu}^n\wedge\check{\mathcal{Y}}_{\nu}(x_{\nu}))\|_{\nu}\leq\|D_{\nu}\check{\mathcal{Y}}_{\nu}(x_{\nu})\|_{\nu}/|a_{\nu}^{(n)}|_{\nu},$ one has $$\p\|D_{\nu}\check{\mathcal{Y}}_{\nu}(x_{\nu})\|_{\nu}\geq\p|a_{\nu}^{
 (n)}|_{\nu}\|D_{\nu}(e_{\nu}^n\wedge\check{\mathcal{Y}}_{\nu}(x_{\nu}))\|_{\nu}$$
$$\geq\p\frac{|a_{\nu}^{(n)}|_{\nu}}{|a_{\nu}^{(*)}a_{\nu}^{(n-k+3)}\cdots a_{\nu}^{(n)}|_{\nu}}\|e_{\nu}^n\wedge\check{\mathcal{Y}}_{\nu}(x_{\nu})\|_{\nu}$$ $$\geq\rho\p\frac{|a_{\nu}^{(0)}|_{\nu}}{|a_{\nu}^{(0)}a_{\nu}^{(*)}a_{\nu}^{(n-k+3)}\cdots a_{\nu}^{(n-1)}|_{\nu}}\geq \rho\p|a_{\nu}^{(0)}|_{\nu},$$ as we wanted. Thus it suffices to show $(*).$ To that end for any place $\nu\in S$ select the term containing $x_{\nu}^{(1)}\wedge x_{\nu}^{(2)}\cdots\wedge x_{\nu}^{(k-2)},$ then one has $$e_{\nu}^n\wedge\check{\mathcal{Y}}_{\nu}(x_{\nu})=\pm z_{\nu}^{(*)}(x_{\nu})\wedge e_{\nu}^n\wedge x_{\nu}^{(1)}\wedge x_{\nu}^{(2)}\cdots\wedge x_{\nu}^{(k-2)}+\hspace{1mm}\begin{array}{l}\mbox{\rm{other terms where one}}\\ \mbox{\rm{or two}}\hspace{1mm} x_{\nu}^{(i)}\hspace{1mm}\mbox{\rm{are missing,}}\end{array}$$ where

$z_{\nu}^{(*)}(x_{\nu})={\widetilde{\nabla}}^{*}(\check{f}_{\nu}(x_{\nu})x_{\nu}^{k-1},\hspace{1mm}a_{\nu}+b_{\nu}\check{f}_{\nu}(x_{\nu})x_{\nu}^{(0)}) $ $$=b_{\nu}{\widetilde{\nabla}}^{*}(\check{f}_{\nu}(x_{\nu})x_{\nu}^{k-1},\hspace{1mm}\check{f}_{\nu}(x_{\nu})x_{\nu}^{(0)})-a_{\nu}{\nabla}^{*}(\check{f}_{\nu}(x_\nu)x_{\nu}^{(k-1)})$$ Using the first expression it follows that $\sup_{x_{\nu}\in B_{\nu}}\|z_{\nu}^{(*)}(x_{\nu})\|_{\nu}\geq \rho_3 \hspace{1mm}|b_{\nu}|_{\nu},$ where the second expression gives, $\hspace{1mm}\sup_{x_{\nu}\in B_{\nu}}\|z_{\nu}^{(*)}(x_{\nu})\|_{\nu}\geq \rho_2|a_{\nu}|_{\nu}-2M^2|b_{\nu}|_{\nu}.$  It is easy to see that there exists $\rho_0$
such that $$\max\{\rho_2|a_{\nu}|_{\nu}-2M^2|b_{\nu}|_{\nu},\rho_3 \hspace{1mm}|b_{\nu}|_{\nu}\}\geq \rho_0\cdot\max\{|a_{\nu}|_{\nu},|b_{\nu}|_{\nu}\}.$$ Therefore $\rho=\rho_0^{\kappa}$ satisfies the conditions of the theorem.

%%%%%%%%%%%%%%%%%%%%%%%%%%%%%%%%%%%%%%%%%%%%%%%%%%%%%%%%%%%%%%%%%%%%%%%%%%%%
%%%%%%%%%%%%%%%%%%%%%% Proof of the main theorem %%%%%%%%%%%%%%%%%%%%%%%%%%%
%%%%%%%%%%%%%%%%%%%%%%%%%%%%%%%%%%%%%%%%%%%%%%%%%%%%%%%%%%%%%%%%%%%%%%%%%%%%

\section{Proof of the main theorem}
Take $\x_0\in \mathbf{U}$. 
Choose a neighborhood $\mathbf{V}\subseteq
\mathbf{U}$ of $\x_0$ and a positive number $\alpha$, as in
theorem~\ref{<}, and pick a ball $\bbf=\p B_{\nu}\subseteq
\mathbf{V}$ containing $\mathbf{x}_0$ such that the ball with the
same center and triple the radius is contained in $\mathbf{U}$. We
prove that $\bbf\cap\mathcal{W}_{\mathcal{R},\Psi}^{\h\fbf}$ has measure
zero. For any $\mathbf{q}\in \mathcal{R}^n$, let
$$A_{\mathbf{q}}=\{(x_{\nu})_{\nu\in S}\in \bbf |\hspace{1mm}
|\langle(f_{\nu}(x_{\nu}))_{\nu\in S}\cdot
\mathbf{q}\rangle|<\Psi(\mathbf{q})\}.$$
We shall proceed by induction on $n$. For $n=1$ set $Q=\mathcal{R}\setminus\{0\}$ and for $n\ge 2$,
by the induction hypothesis, it would be enough to deal with the set $Q=\{(q_{\nu})_{\nu\in S}\in \mathcal{R}^n| q_{\nu}^{(i)}\neq
0\hspace{1mm}\mbox{for any}\hspace{1mm} \nu\hspace{1mm}
\&\hspace{1mm} i\}$ i.e. the set of
vectors with non-zero coordinates, namely we have to prove that the set of points $\mathbf{x}$ in $\bbf$ which belong to infinitely many
$A_\mathbf{q}$ for $\mathbf{q}\in Q$ has measure zero. Now let
$$A_{\ge
\mathbf{q}}=\left\{\mathbf{x}\in A_{\mathbf{q}}|\hspace{1mm}\begin{array}{l}\|\q\g\mathbf{f}(\x)\|_{\nu}>\|\q\|_{S}^{-\epsilon}\hspace{4mm}\nu\in {S_{\mathcal{R}}}^c\h \\ \|\q\g\mathbf{f}(\x)\|_{\nu}>\|\q\|_S^{1-\epsilon}\h\nu\in S_{\mathcal{R}}\end{array}\right\}\h \&$$\vspace{.5mm}
$$A_{<\mathbf{q}}=A_{\mathbf{q}}\setminus A_{\ge\mathbf{q}}.$$
For any $\mathbf{t}=(t_1,\cdots,t_n)\in \bbn^n$, let
$$\bar{A}_{\ge\mathbf{t}}=\bigcup_{\mathbf{q}\in Q, 2^{t_i}\le
|q^{(i)}|<2^{t_{i+1}}}A_{\ge\q}\hspace{2mm}\mbox{and}\hspace{2mm}\bar{A}_{<\mathbf{t}}=\bigcup_{\mathbf{q}\in Q, 2^{t_i}\le
|q^{(i)}|<2^{t_{i+1}}}A_{<\q}.$$ It is clear that the union of
$A_{\mathbf{q}}$'s where $\mathbf{q}$ varies in $Q$ is the
same as the union of the $\bar{A}_{\ge\mathbf{t}}$'s and $\bar{A}_{<\mathbf{t}}$'s where
$\mathbf{t}$ varies in $\bbn^n$.
\\

\noindent
By the conditions posed on $\Psi$, we have 
\begin{itemize}
\item[(i)] If for any $i$ one has $2^{t_i}\le |q^{(i)}|_S<2^{t_{i+1}}$,
 then $\Psi(\mathbf{q})\le\Psi(2^{t_1},\cdots,2^{t_n})$.
 \item[(ii)] For large enough $\para\mathbf{q}\para$, we have $\Psi(\mathbf{q})\le (\prod_i|q^{(i)}|_S)^{-g(\mathcal{R})}$.
 \end{itemize}
These show $\bar{A}_{\ge\mathbf{t}}$ is a subset of the set defined in theorem~\ref{>} \vspace{1mm}with $T_i=2^{t_i+1}$ and $\delta=2^{g(\mathcal{R})\sum_{i=1}^n(t_i+1)}\Psi(2^{t_1},\cdots,2^{t_n}).$ Now one notes that the convergence of the sum $\sum\Psi(\q)$ gives that of $\sum2^{g(\mathcal{R})\sum_{i=1}^n(t_i+1)}\Psi(2^{t_1},\cdots,2^{t_n})$. So Borel-Cantelli lemma gives us that almost all points of $\U$ are in at most finitely many $\bar{A}_{\ge\mathbf{t}},$ as we desired.

As we said
$\Psi(\mathbf{q})\le (\prod_i|q^{(i)}|_S)^{-g(\mathcal{R})}$
 for large enough $\para\mathbf{q}\para_S$. So if for
 any $i$ one has $2^{t_i}\le |q^{(i)}|_S<2^{t_{i+1}}$,
 then $\Psi(\mathbf{q})\le 2^{-g(\mathcal{R})\sum_{i=1}^n t_i}$, for large enough $\para\mathbf{t}\para$. Now for such $\mathbf{t}$ we may write $\bar{A}_{<\mathbf{t}}=\cup_{\nu\in S}\bar{A}_{\mathbf{<t},\nu}$ where each $\bar{A}_{<\mathbf{t},\nu}$ is contained
 in set defined in~\ref{<}, with $\delta=2^{\frac{-g(\mathcal{R})\sum_{i=1}^nt_i}{\kappa}},\hspace{1mm}T_i=2^{t_i+1}$ and
 $$\begin{array}{ll}K_{\nu}=2^{(1-\epsilon)({\para\mathbf{t}\para}+1)} \hspace{2mm}\mbox{if}\h\h\nu\in S_{\mathcal{R}},& \h K_{\nu}=2^{-\epsilon\para\mathbf{t}\para}\h\h\mbox{if}\h\h\nu\in {S_{\mathcal{R}}}^c \\ K_{\omega}=2^{\|t\|+1}\h\h\mbox{if}\h\h\omega\in S_{\mathcal{R}}\setminus\{\nu\} , & K_{\omega}=1 \hspace{2mm}\mbox{if}\h \omega\in {S_{\mathcal{R}}}^c\setminus\{\nu\}\end{array}$$ 
 It is not hard to verify the inequalities in the
 hypothesis of theorem~\ref{<}. Moreover, one has
 $$\vare^{\kappa{(n+1)}}=\max\{\delta^{\kappa(n+1)},\delta^{\kappa}(\frac{T_1\cdots T_n}{\max
 T_i})^{g(\mathcal{R})}\p K_{\nu}\}=\delta^{\kappa}(\frac{T_1\cdots T_n}{\max
 T_i})^{g(\mathcal{R})}\p K_{\nu},$$
So we have $\vare\leq C'2^{\frac{-\epsilon\para\mathbf{t}\para}{\kappa(n+1)}}$ for some constant $C'$ depending on $\mathbf{f}.$ So by theorem~\ref{<}, and the choice of $\mathbf{V}$ and $\bbf$, measure of
 $\bar A_{\mathbf{t}}$ is at most
 $$C 2^{-\frac{\alpha\epsilon\para\mathbf{t}\para}{(n+1)\kappa^2}}|\bbf|.$$
 Therefore the sum of measures of $\bar A_{<\mathbf{t}}$'s is
 finite, thus another use of Borel-Cantelli lemma completes the proof.

%%%%%%%%%%%%%%%%%%%%%%%%%%%%%%%%%%%%%%%%%%%%%%%%%%%%%%%%%%%%%%%%%%%%%%%%%%%
%%%%%%%%%%%%%%%%% Remarks and open problems %%%%%%%%%%%%%%%%%%%
%%%%%%%%%%%%%%%%%%%%%%%%%%%%%%%%%%%%%%%%%%%%%%%%%%%%%%%%%%%%%%%%%%%%%%%%%%%
\section{A few remarks and open problems}

\textbf{1.} In this article, we worked with product of non-degenerate $p$-adic analytic manifolds. Historically this is the case which has drawn most attention. However most of the argument is valid for the product of non-degenerate $C^k$ manifolds. The only part in which we use analyticity extensively is in the proof of lemma~\ref{bounded}. 

\textbf{2.} In this paper, we studied analytic manifolds containing a real analytic component.  In~\cite{MS}, we prove a convergence Khintchine-type theorem for simultaneous approximation in non-Archimedian places. There we also prove the divergent part in the $p$-adic case. The following is an important corollary of the main results of loc. cit.
\begin{thm}
Let $M\subseteq\bbq_p^n$ be a $p$-adic non-degenerate analytic manifold. Suppose $\Psi:\bbz^n\setminus\{0\}\rightarrow(0,\infty)$ is a function of norm and decreasing in terms of norm. Then almost every (resp. almost no) point of $M$ is $\Psi$-Approximable if $\sum_{\q\in\bbz^n}\Psi(\q)=\infty$ (resp $\sum_{\q\in\bbz^n}\Psi(\q)<\infty$).
\end{thm}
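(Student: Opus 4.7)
The plan is to obtain this dichotomy by specializing the general convergence and divergence $p$-adic Khintchine-type theorems of [MS]. Since $\Psi$ is a decreasing function of $\|\q\|$ alone, it is automatically monotone in each coordinate (hypothesis (d,i) of this paper), so it fits the $\Psi$-approximation framework with $\mathcal{R}=\bbz$ and $S=\{p\}$; note $\nu_\infty\notin S$, which is the regime treated in [MS]. The convergence half is then immediate from the convergent Khintchine-type theorem of [MS]: the hypothesis $\sum_\q\Psi(\q)<\infty$ is exactly the input needed, and the output states that almost no $\x\in M$ is $\Psi$-approximable.

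Structurally, the convergent half of [MS] is proved by the same dichotomy scheme as Theorem 1.1 here: split $\q$'s by the size of $\|\q\cdot\g \fbf(\x)\|_p$; for the ``large gradient'' range use a $p$-adic Taylor-expansion and covering argument (the analogue of Theorem 1.2, essentially the content of Section 3 with $S=\{p\}$, $\kappa=1$, $g(\mathcal{R})=0$); for the ``small gradient'' range reduce to quantitative non-divergence on the space of discrete $\bbz_S$-submodules of $\bbq_p^{n+d+1}$, invoking the Kleinbock--Tomanov estimate (Theorem 6.1) exactly as in Sections 5--6. A final Borel--Cantelli with the decomposition into dyadic boxes $\{2^{t_i}\le |q^{(i)}|<2^{t_i+1}\}$, as in Section 7, concludes the convergence part with $\Psi$ of norm type.

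For the divergence direction, the plan is to invoke the divergence counterpart from [MS]. The approach is to establish a $p$-adic ubiquity statement for the family of resonant sets $R_{\q,q_0}=\{\x\in M:\q\cdot\fbf(\x)+q_0=0\}$, $(\q,q_0)\in\bbz^n\times\bbz$: for any ball $B\subseteq M$, once the height bound $N$ is large enough, a definite proportion of $|B|$ is covered by $p$-adic tubes of the appropriate radius around the $R_{\q,q_0}$ of height $\le N$. Starting from the Dirichlet-type baseline exponent $\|\q\|^{-(n+1)}$ recorded in the Introduction, a $p$-adic adaptation of the Beresnevich--Bernik--Dodson--Velani ubiquity framework upgrades the divergence of $\sum_\q\psi(\|\q\|)$ to the full-measure statement that almost every $\x\in M$ is $\Psi$-approximable.

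The main obstacle is precisely the divergence half, namely establishing the $p$-adic ubiquity of rational hyperplanes near a non-degenerate analytic $M$. The convergence half is essentially routine after inspecting the arguments of Sections 3 and 5--6 with $\nu_\infty\notin S$. Ubiquity, by contrast, requires a sharp lower bound on the number of near-zeros of $\q\cdot\fbf$ on $M$ with controlled height, and this is carried out in [MS] by an induction on dimension together with a $p$-adic regular-systems construction; crucially, the construction works only when the single non-Archimedean place $p$ is present, which explains why the divergence counterpart is available here but not in the mixed setting $\nu_\infty\in S$ of the main theorem of the present paper (cf.\ Remark 5--6 after Theorem~\ref{khintchine}).
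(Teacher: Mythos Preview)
Your proposal is aligned with the paper: this theorem is not proved in the present article at all, but is stated in Section~8 as ``an important corollary of the main results of'' \cite{MS}. So the paper's ``proof'' is simply the citation, and your plan to invoke \cite{MS} for both halves is exactly what the authors do.

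A couple of small corrections to your sketch. First, $g(\bbz)=1$, not $0$: in the paper's conventions $\bbz=\bbz_{\{\nu_\infty\}}$ and $g(\mathcal{R})=|S'|$, so with $\mathcal{R}=\bbz$ the growth exponent is $1$; this is consistent with the Dirichlet baseline $\|(\q,q_0)\|^{-(n+1)}$ you quote later. Second, be careful when you say the convergence half is ``essentially the content of Section~3 with $S=\{p\}$'': the standing hypothesis (a) of the paper is $\nu_\infty\in S$, and several estimates in Section~3 (the definition of $\bbf(\x)$, the splitting into $S_{\mathcal{R}}$ and $S_{\mathcal{R}}^c$, the use of the real mean value theorem in step~(ii)) are formulated under that assumption. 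The purely non-Archimedean analogues require modification and are carried out in \cite{MS}, not here; so your reference should be directly to \cite{MS} rather than to a specialization of the present Sections~3--7. Your description of the divergence strategy (ubiquity/regular systems for resonant hyperplane sections) is a reasonable outline of what \cite{MS} does, and you correctly note, in line with Remarks~5--6 after Theorem~\ref{khintchine}, that the divergence argument there works only for a single non-Archimedean place.
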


\textbf{3.} Both here and in~\cite{MS}, we consider {\it homogeneous diophantine approximation}, namely we are approximating zero. One can consider the inhomogeneous problem.  As we mentioned in the introduction, V.~Bernik and E.~Kovalevskaya~\cite{BK2} proved the inhomogeneous problem for the Veronese curve in product of local fields, i.e. $\bbc\times\bbr\times\prod_{p\in S} \bbq_p$. It would be interesting if the inhomogeneous problem could be proved for non-degenerate manifolds.

\textbf{4.} As we recalled in the introduction, historically there are two kinds
of Diophantine approximations. One of them is coming from the dot product which
is the question that we considered, and the other one is simultaneous 
approximation of each of the components. 

\begin{problem}~\label{dual}
Let $\vec{f}=(f_1,\cdots,f_n)$, where $f_i$'s are analytic functions from an open 
subset $U$ of $\bbr^d$ to $\bbr$ and $1,f_1,\cdots,f_n$ are linearly 
independent. Let $\psi$ be a decreasing map from $\bbz$ to $\bbr^+$. Define
$$\mathcal{W}_{f,\psi}=\{\mathbf{x}\in \mathbf{U}| \|q \vec{f}(\mathbf{x})+\vec{p}\|<\psi(q)\hspace{1mm}\mbox{for infinitely many}\hspace{1mm} q\in\bbz\hspace{1mm}\mbox{and} \hspace{1mm}\vec{p}\in\bbz^n\}.$$
Then $\mathcal{W}_{f,\psi}$ is null (resp. co-null) if $\sum_{q\in\bbz}\psi(q)^n$ is convergent (resp. divergent).
\end{problem}
\noindent
For a general $\psi$ very little is known. However there are partial results in this
direction, e.g. Dodson, Rynne, Vickers~\cite{DRV} proved the convergence Khintchine-type
theorem for a non-degenerate manifold $M$ which is 2-convex at almost every point
i.e. at almost every point $\xi$ for any unit vector $v\in T_{\xi}M$, at least 
two of the principal curvatures $\tau_i(\xi,v)$, are non-zero and have the same sign.
Much more is known for the case of planar curves ref.~\cite{BDVV}, where they settled the 
divergence case for $C^3$-planner curves and the convergence case for rational quadratic curves.
However even the case of the curve $(x,x^2,x^3)$ is still open.

%%%%%%%%%%%%%%%%%%%%%%%%%%%%%%%%%%%%%%%%%%%%%%%%%%%%%%%%%%%%%%%%%%%%%%%%%%%
%%%%%%%%%%%%%%%%%%%%%%%%%%%%%% Bibliography  %%%%%%%%%%%%%%%%%%%%%%%%%%%%%%
%%%%%%%%%%%%%%%%%%%%%%%%%%%%%%%%%%%%%%%%%%%%%%%%%%%%%%%%%%%%%%%%%%%%%%%%%%%

%%%%%%%%%%%%%%%%%%%%%%%%%%%%%%%%%%%%%%%%%%%%%%%%%%%%%%%%%%%%%%%%%%%%%%%%%%%%%

{\sc Dept. of Math., Yale Univ., New Haven, CT, 06520}

{\em E-mail address:} {\tt amir.mohammadi@yale.edu}

{\sc Dept. of Math, Princeton Univ., Princeton, NJ, 08544}

{\em E-mail address:} {\tt asalehi@Math.Princeton.EDU}

\end{document}